\numberwithin{equation}{section}
\newtheorem{theorem}{Theorem}[section]
\newtheorem{lemma}[theorem]{Lemma}    
\newtheorem{corollary}[theorem]{Corollary}
  \theoremstyle{remark}
\theoremstyle{definition}
\newtheorem{definition}[theorem]{Definition}
\newtheorem{remark}[theorem]{Remark}
\newtheorem*{question*}{Question}
\renewcommand{\geq}{\geqslant}
\renewcommand{\leq}{\leqslant}
\newcommand{\set}[1]{\left\{#1\right\}}
\newcommand{\abs}[1]{\left| #1\right|}
\newcommand{\Bigabs}[1]{\Bigl| #1 \Bigr|}
\newcommand{\biggabs}[1]{\biggl| #1 \biggr|}
\newcommand{\ceil}[1]{\left\lceil #1 \right\rceil}
\newcommand{\floor}[1]{\left\lfloor #1 \right\rfloor}
\newcommand{\brac}[1]{\left( #1 \right)}
\newcommand{\bigbrac}[1]{\bigl( #1 \bigr)}
\newcommand{\norm}[1]{\left\| #1\right\|}
\newcommand{\recip}[1]{\frac{1}{#1}}
\newcommand{\trecip}[1]{\tfrac{1}{#1}}
\newcommand{\vh}{\mathbf{h}}
\newcommand{\N}{\mathbb{N}}
\newcommand{\Z}{\mathbb{Z}}
\newcommand{\R}{\mathbb{R}}
\newcommand{\C}{\mathbb{C}}
\newcommand{\F}{\mathbb{F}}
\newcommand{\E}{\mathbb{E}}
\newcommand{\supp}{\mathrm{supp}}
\newcommand{\codim}{\mathrm{codim}\,}
\newcommand{\eps}{\varepsilon}
\let\@@pmod\pmod
\DeclareRobustCommand{\pmod}{\@ifstar\@pmods\@@pmod}
\def\@pmods#1{\mkern4mu({\operator@font mod}\mkern 6mu#1)}
\begin{document}

\title{On the Ramsey number of the Brauer Configuration}

\author{Jonathan Chapman}
\address{Department of Mathematics\\ University of Manchester\\ Oxford Road\\ Manchester\\ M13 9PL\\ UK}
\email{jonathan.chapman@manchester.ac.uk}
\author{Sean Prendiville}
\address{Department of Mathematics and Statistics
	\\ Lancaster University\\  UK}
\email{s.prendiville@lancaster.ac.uk.}



\date{\today}

\begin{abstract}
	We obtain a double exponential bound in Brauer's generalisation of  van der Waerden's theorem, which concerns progressions with the same colour as their common difference. Such a result has been obtained independently and in much greater   generality by Sanders. Using Gowers' local inverse theorem, our bound is quintuple exponential in the length of the progression.  We  refine this bound in the colour aspect for three-term progressions, and combine our arguments with an insight of Lefmann to obtain analogous bounds for the Ramsey numbers of certain nonlinear quadratic equations. 
\end{abstract}

\maketitle

\setcounter{tocdepth}{1}
\tableofcontents

\section{Introduction}

Schur's theorem states that in any partition of the positive integers into finitely many pieces, at least one part contains a solution to the equation $x+y = z$.  By a theorem of van der Waerden, the same is true for the equation of three-term arithmetic progressions $x+y = 2z$. A common generalisation of these theorems due to Brauer states that, in any finite colouring of the positive integers, there is a monochromatic arithmetic progression of length $k$ with the same colour as its common difference.

There is a finitary analogue of these results, asserting that the same holds for colourings of the interval $\set{1, 2, \dots, N}$, provided that $N$ is sufficiently large in terms of the number of colours. Determining the minimal such number $N$ (the \emph{Ramsey} or \emph{Rado} number of the system) has received much attention for arithmetic progressions, and a celebrated breakthrough of Shelah \cite{ShelahPrimitive} showed that these numbers (\emph{van der Waerden numbers}) are primitive recursive.  One spectacular consequence of Gowers' work on Szemer\'edi's theorem \cite{GowersSzem} is a bound on van der Waerden numbers which is quintuple exponential in terms of the length of the progression, and double exponential in terms of the number of pieces of the partition.

Using Gowers' local inverse theorem for the uniformity norms, we obtain a bound for the Ramsey number of Brauer configurations which is comparable to that obtained for arithmetic progressions.

\begin{theorem}[Ramsey bound for Brauer configurations]\label{weak brauer theorem}
	There exists an absolute constant $C=C(k)$ such that if $r \geqslant 2$ and $N\geqslant \exp\exp(r^{C})$, then any $r$-colouring of $\{1,2,...,N\}$ yields a monochromatic $k$-term progression  which is the same colour as its common difference.  Moreover, it suffices to assume that
		\begin{equation}\label{mainBound}
	N\geqslant 2^{2^{r^{2^{2^{k+10}}}}}.
	\end{equation}
\end{theorem}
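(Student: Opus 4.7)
The approach is a Gowers-style density-increment argument. Pigeonhole the $r$-colouring to obtain a colour class $A \subseteq [N]$ of density $\alpha := |A|/N \geq 1/r$, and consider the count of Brauer configurations in $A$,
\[
T(A) := \sum_{x,d \in [N]} 1_A(x)\,1_A(x+d)\cdots 1_A(x+(k-1)d)\,1_A(d).
\]
Writing $1_A = \alpha + f$ and applying Cauchy--Schwarz repeatedly in $x$ and $d$ to telescope through the $k+1$ linear forms of the Brauer system, a standard generalised von Neumann argument delivers
\[
\bigabs{T(A) - \alpha^{k+1} N^2} \ll_k \|f\|_{U^{k}[N]}\, N^2.
\]
If the colouring admits no monochromatic Brauer configuration then $T(A) = 0$, which forces $\|f\|_{U^{k}[N]} \gg_k \alpha^{k+1} \gg_k r^{-(k+1)}$.

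The next step is to feed this large Gowers norm into Gowers' local inverse theorem for the $U^{k}$ norm on $[N]$, extracting a sub-progression $P \subseteq [N]$ on which $A$ has elevated density,
\[
\frac{|A \cap P|}{|P|} \;\geq\; \alpha + c_k\, r^{-C_k},
\qquad
|P| \;\geq\; N^{c_k\, r^{-C_k}},
\]
where $C_k$ is the constant arising from Gowers' inverse theorem, which grows tower-like in $k$. Since the density can strictly increase at most $O(r^{C_k+1})$ times before exceeding $1$, iterating forces a contradiction provided the starting length $N$ is at least doubly exponential in $r$; careful book-keeping, together with the tower-like growth of $C_k$, yields the explicit bound \eqref{mainBound}.

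The principal technical obstacle is that a Brauer configuration is not preserved under general affine transformations. Under $x \mapsto a + bx$ the tuple $(x, x+d, \dots, x+(k-1)d, d)$ becomes $(a+bx, \dots, a+bx+(k-1)bd, a+bd)$, whose last coordinate is $a+bd$ rather than $bd$; a Brauer configuration on the rescaled interval $[|P|]$ therefore does not in general pull back to a Brauer configuration on $[N]$ unless $a = 0$. To iterate, one must either pass to a \emph{homogeneous} sub-progression $b \cdot [M']$ inside $P$ (losing at most a constant factor in length), or else work throughout with a slightly larger class of ``shifted'' Brauer configurations and recover genuine ones at the end. Arranging this reduction so as not to erode the density gain produced by Gowers' inverse theorem is the delicate step on which the competitiveness of the final constants depends.
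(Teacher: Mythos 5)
Your argument uses the colouring only once, in the opening pigeonhole, and thereafter runs a pure density increment on the single class $A$. That cannot work, because the density statement it would establish is false: the set $A=(N/2,N]$ has density $\tfrac12$ in $[N]$ and contains no Brauer configuration at all (if $d\in A$ and $x\in A$ then $x+d>N$, so even the two-point progression plus its difference is impossible). Concretely, your iteration is perfectly consistent with such a set: the balanced function of an interval is highly non-uniform, the inverse theorem happily returns a subprogression $P\subset(N/2,N]$ on which the density rises, even to $1$, and yet no Brauer configuration is produced, because after rescaling the condition ``common difference lies in $A$'' refers to the set $\set{d: bd\in A}$ (or $\set{d:a+bd\in A}$), about which the increment has retained no information whatsoever --- it may be empty. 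Neither of your proposed repairs closes this. A progression $P=a+b\cdot[M]$ contains no homogeneous progression $b'\cdot[M']$ of length $\geq 2$ unless $b\mid a$ (all elements of $P$ are $\equiv a \pmod b$, while a homogeneous sub-AP forces them $\equiv 0$), so ``pass to a homogeneous sub-progression inside $P$, losing a constant factor'' is not available; and the ``shifted Brauer configurations'' variant breaks at the counting step, since the generalised von Neumann bound only yields a positive count when the shifted difference set $\set{d: a+bd\in A}$ is dense, which a single colour class cannot guarantee.

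The paper's proof differs from yours exactly at this point, and the difference is the whole content of the argument. Rather than incrementing the density of one class, it increments the \emph{maximal translate density} $\Delta(q,M)=\sum_{i=1}^r \max_x |A_i\cap(x+q\cdot[M])|/M$ of \emph{all} colour classes simultaneously (Lemma \ref{thmUniBrau}), always tracking a \emph{homogeneous} progression $q\cdot[M]$: Gowers' inverse theorem is applied to whichever class fails to be uniform on its best translate, and Lemma \ref{preserving density} together with Corollary \ref{brauer preservation} shows the total translate density still grows, which bounds the number of steps by $O(r/\eta)$. Only at the end is the pigeonhole over the full partition invoked a second time: since $A_1\cup\dots\cup A_r=[N]$, some class $A_j$ occupies a $\gg 1/(rk)$ proportion of $q\cdot[M/(2k-2)]$, supplying the set $B$ of admissible common differences, while the same class is at least as dense, and $\eps$-uniform of degree $k-1$, on its own maximal translate, supplying the set $A$ for the progression part; the two-set count $\Lambda(1_A;1_B)$ is then made positive via Lemma \ref{propBrauCount} and Corollary \ref{corUkCont}. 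So the quantitative bookkeeping you sketch is broadly in the right spirit, but the step you label as merely ``delicate'' is in fact the missing idea, and without the simultaneous, homogeneous-anchored increment the approach proves a statement that is false.
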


A double exponential bound has been obtained independently and in maximal  generality by Sanders \cite{SandersBootstrapping}, who bounds the Ramsey number of an \emph{arbitrary} system of linear equations with this colouring property, often termed \emph{partition regularity}.\footnote{A system of equations is said to be \emph{partition regular} (over $\N$) if any finite colouring of $\N$ yields a monochromatic solution to the system.} Our results and those of Sanders are the first quantitatively effective bounds for configurations lacking translation invariance and of `true complexity' greater than one (see \cite{GowersWolfTrue} for further explanation).  

Gowers \cite{GowersSzem} obtains the bound \eqref{mainBound} for progressions of length $k+1$, this being the appropriate analogue of the $(k+1)$-point Brauer configuration in Theorem \ref{weak brauer theorem}.  For a four-point Brauer configuration, we improve the exponent of $r$ on combining our method with an energy-increment argument of Green and Tao \cite{GT09}.

\begin{theorem}[Improved bound for four-point Brauer configurations]\label{improved bound}
\sloppy There exists an absolute constant $C$ such that if  $N\geqslant  \exp\exp(Cr\log^2r)$, then in any $r$-colouring of $\set{1,2, \dots, N}$ there exists a monochromatic three-term progression with the same colour as its common difference.
\end{theorem}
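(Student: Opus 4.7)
I would specialise the approach of Theorem~\ref{weak brauer theorem} to the three-term case and then sharpen its density-increment step. By pigeonholing, assume $A\subseteq[N]$ has density $\alpha\geqslant 1/r$ and contains no four-point Brauer configuration. Work with the counting functional
\[
T(A)=\E_{x, d}\,1_A(x)\,1_A(x+d)\,1_A(x+2d)\,1_A(d),
\]
interpreted in $\Z/p\Z$ with a prime $p\asymp N$. Expanding in Fourier yields
\[
T(A)=\sum_{a, b\in\Z/p}\widehat{1_A}(a)\,\widehat{1_A}(-a-b)\,\widehat{1_A}(b)\,\widehat{1_A}(a-b),
\]
whose $a=b=0$ term equals $\alpha^4$. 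The remaining contribution is controlled, after H\"older's inequality and Parseval, by $C\alpha^2\max_{\xi\neq 0}\bigabs{\widehat{1_A}(\xi)}$.

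Since $A$ is Brauer-free we have $T(A)=O(1/N)$, so once $N$ is sufficiently large in $1/\alpha$ we force a nontrivial character $\xi\neq 0$ with $\bigabs{\widehat{1_A}(\xi)}\gtrsim\alpha^2$. By the usual procedure of partitioning $[N]$ into sub-progressions along which $e(\xi\,\cdot\,/p)$ is close to constant, this produces a sub-progression $P\subseteq[N]$ of length $\gtrsim N^{c}$ on which the density of $A$ exceeds $\alpha+c'\alpha^2$. Iterating this density increment naively already gives a bound of shape $N\leqslant\exp\exp(Cr)$.

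To sharpen the exponent of $r$ to the $r\log^2 r$ claimed in Theorem~\ref{improved bound}, replace the single-frequency increment with the energy-increment scheme of Green and Tao~\cite{GT09}: rather than peeling off one large frequency at a time, at each stage extract the entire $L^2$-large spectrum of $1_A$ and pass to a Bohr neighbourhood refining all these frequencies simultaneously, recording a density gain proportional to the full recovered energy. The effective per-step recurrence becomes approximately $\alpha\mapsto\alpha+\Omega(\alpha^2/\log^2(1/\alpha))$, while a Bohr-set-to-progression lemma keeps the per-step loss in $N$ polynomial. Integrating this recurrence terminates after $O(r\log^2 r)$ steps and unwinds to the bound $N\leqslant\exp\exp(Cr\log^2 r)$.

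The main technical obstacle is the Fourier bound in the first step: the asymmetric factor $1_A(d)$ in the Brauer count breaks the translation-invariance enjoyed by a pure 3-AP count, so one cannot directly invoke the standard generalised von Neumann. A bespoke two-variable Fourier expansion (or a single Cauchy--Schwarz in $d$ followed by a $U^2$-inverse theorem) is required to extract a Fourier coefficient of the optimal size $\alpha^{O(1)}$. Carrying the energy-increment bookkeeping through this modified setup -- in particular, re-running the Brauer Fourier expansion inside each sub-progression of the iteration without incurring a cumulative logarithmic loss -- is the most delicate part of the argument, since any inefficiency here would spoil the $\log^2 r$ saving.
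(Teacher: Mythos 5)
Your proposal has two genuine gaps, either of which is fatal. First, the opening reduction is invalid: you cannot pigeonhole down to a single colour class $A$ of density $\alpha\geqslant 1/r$ and then run a density argument, because the Brauer configuration $x,x+d,x+2d,d$ is not translation invariant and is \emph{not} density regular. For example, $A=\set{n\in[N]:n\equiv 2 \pmod 3}$ has density $1/3$ but contains no such configuration, since $x,x+d,x+2d\in A$ forces $d\equiv 0\pmod 3$, so $d\notin A$. This is exactly why the paper never isolates one colour class: it runs an increment on the \emph{maximal translate density} summed over all $r$ classes, proving a sparsity--expansion dichotomy (Lemma \ref{brauer dichotomy lemma}) on a single homogeneous progression $q\cdot[M]$ valid for every class simultaneously, and only pigeonholes at the very end, where the common difference is drawn from the homogeneous progression $q\cdot[M]$ while the progression $x,x+d,x+2d$ sits in a translate $a_i+q\cdot[M]$.

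Second, the Fourier-analytic core of your argument fails: the pattern $x,x+d,x+2d,d$ has true complexity $2$, because the squares of the linear forms satisfy the relation $x^{2}-2(x+d)^{2}+(x+2d)^{2}-2d^{2}=0$. Consequently the off-diagonal terms in your expansion cannot be bounded by $C\alpha^{2}\max_{\xi\neq 0}\abs{\widehat{1_A}(\xi)}$: a set that is linearly uniform but quadratically structured (e.g.\ defined by $\set{n:\{\theta n^{2}\}<\delta}$) has negligible nontrivial Fourier coefficients yet its Brauer count deviates badly from $\alpha^{4}$, and no application of Cauchy--Schwarz in $d$ followed by a $U^{2}$ inverse theorem can recover control, precisely because the configuration is only controlled by the $U^{3}$ norm (Lemma \ref{lemGenVon} with $k=3$). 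The energy increment alluded to in the abstract is therefore not the linear large-spectrum/Bohr-set scheme you describe but the Green--Tao quadratic energy increment: the paper applies \cite[Theorem 5.6]{GT09} to replace $1_A$ by its projection onto a quadratic factor of complexity $r^{O(1)}$, extracts a set $\Omega$ of measure $\gg r^{-3}M$ on which the projection exceeds $(1+c)\alpha$, and converts atoms of the factor into long progressions via \cite[Proposition 6.2]{GT09}. Finally, the exponent $r\log^{2}r$ does not arise from a recurrence of the shape $\alpha\mapsto\alpha+\Omega(\alpha^{2}/\log^{2}(1/\alpha))$; it comes from the multiplicative gain $\delta\mapsto(1+c)\delta$, which caps the number of iterations at $O(r\log r)$, combined with the per-step degradation $N_{n}\geqslant\exp(-r^{O(1)})N_{n-1}^{r^{-O(1)}}$, whose logarithm contributes the second factor of $\log r$.
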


Due to an insight of Lefmann \cite{lefmann} we are able to use (a variant of) Theorem \ref{weak brauer theorem} to bound the Ramsey number of certain partition regular \emph{nonlinear} equations.
\begin{theorem}\label{intro lefmann}
Let $a_1, \dots, a_s \in \Z\setminus\set{0}$ satisfy the following:
\begin{enumerate}[\upshape (i)]
		\item there exists a non-empty set $I\subset[s]$ such that $\sum_{i\in I}a_{i}=0$;
		\item the system
		\begin{equation*}
		x_0^{2}\sum_{i\notin I}a_{i}+ \sum_{i\in I}a_{i}x_{i}^{2}=
		\sum_{i\in I}a_{i}x_{i}=0.
		\end{equation*}
		has a rational solution with $x_0 \neq 0$.
	\end{enumerate}
Then there exists an absolute constant $C = C(a_1, \dots, a_s)$ such that for $r \geq 2$ and $N \geq \exp \exp (r^C)$, any $r$-colouring of $\set{1, 2, \dots, N}$ yields a monochromatic solution to the diagonal quadric
$$
a_1 x_1^2 + \dots + a_s x_s^2 = 0.
$$
\end{theorem}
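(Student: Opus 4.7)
Following Lefmann's insight, I would translate the nonlinear equation into a Brauer-type Ramsey problem via a linear substitution. The system in hypothesis (ii) is homogeneous (quadratic in its first equation and linear in its second), so after scaling I may clear denominators and assume there are integers $v_0, (v_i)_{i \in I}$ satisfying
\begin{equation*}
\sum_{i \in I} a_i v_i = 0 \quad \text{and} \quad v_0^2 \sum_{i \notin I} a_i + \sum_{i \in I} a_i v_i^2 = 0.
\end{equation*}
Replacing all $v_j$ by their negatives if necessary, I may assume $v_0 > 0$; by shifting $v_i \mapsto v_i + M$ (and absorbing the shift into the base of the forthcoming arithmetic progression), I may also assume $0 \leq v_i < k$ for some $k$ depending only on $a_1, \dots, a_s$.

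For positive integers $u, d$ to be chosen, set $y_i := v_0 d$ when $i \notin I$ and $y_i := u + v_i d$ when $i \in I$. A direct expansion, combining hypothesis (i) with the two identities above, gives
\begin{equation*}
\sum_{i=1}^s a_i y_i^2 = u^2 \sum_{i \in I} a_i + 2ud \sum_{i \in I} a_i v_i + d^2 \brac{v_0^2 \sum_{i \notin I} a_i + \sum_{i \in I} a_i v_i^2} = 0.
\end{equation*}
It therefore suffices to locate $u, d \geq 1$ with $\set{u + jd : 0 \leq j \leq k-1} \cup \set{v_0 d}$ monochromatic under the given $r$-colouring.

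To produce such a configuration I would establish a variant of Theorem \ref{weak brauer theorem}, allowing an extra dilate of the common difference. This augmented system is partition regular: the columns corresponding to $u + jd$ sum to zero in the matrix of linear equations among the $y_i$'s, and the column for $v_0 d$ lies in the span of these, so Rado's columns condition holds. I expect the proof to parallel that of Theorem \ref{weak brauer theorem} since the augmented configuration retains the same Gowers-norm complexity as the length-$k$ Brauer progression, so the local inverse theorem feeds into the same density-increment strategy. A more pedestrian alternative is to apply Theorem \ref{weak brauer theorem} directly to the product colouring $n \mapsto \bigl(\chi(n), \chi(v_0 n), \chi(v_0^2 n), \dots, \chi(v_0^r n)\bigr)$ on $[1, \lfloor N/v_0^r\rfloor]$ and use pigeonhole on the colour sequence $\chi_\ell := \chi(v_0^\ell d)$ (together with the fact that the dilated configurations $\set{v_0^\ell(u+jd)} \cup \set{v_0^\ell d}$ are themselves Brauer configurations) to force a layer whose dilate by $v_0$ lies in the same colour class as the base.

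The principal obstacle is showing that this augmented Brauer number retains the $\exp\exp(r^C)$ bound of Theorem \ref{weak brauer theorem}. Fortunately, even the crude product-colouring reduction only costs a polynomial factor in the number of colours, which remains within the allowed double-exponential bound, so any clean implementation of the Gowers-norm argument for this configuration will suffice to conclude.
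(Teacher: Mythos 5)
Your main route is exactly the paper's: your explicit computation with the integers $v_0,(v_i)_{i\in I}$ (clearing denominators, negating to get $v_0>0$, and shifting the $v_i$, which is legitimate since $\sum_{i\in I}a_i=0$ preserves both identities) is precisely Lefmann's criterion, and the paper likewise reduces Theorem \ref{intro lefmann} to a Ramsey bound for the generalised Brauer configuration $\set{x,x+d,\dots,x+(k-1)d,\lambda d}$ with $\lambda=v_0$, proved by rerunning the density-increment/local-inverse argument of Theorem \ref{weak brauer theorem} essentially verbatim (only the counting set $B$ and the generalised von Neumann bound pick up constants depending on $\lambda$), so the primary proposal is correct and matches the paper. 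One caution: the ``pedestrian alternative'' does not work as stated, and you should not lean on it for the quantitative conclusion --- the product colouring $n\mapsto(\chi(n),\chi(v_0n),\dots,\chi(v_0^r n))$ has up to $r^{r+1}$ colours, which is not a polynomial factor and turns $\exp\exp(r^{C})$ into a bound of shape $\exp\exp(r^{Cr})$, and moreover the pigeonhole only produces two equal layers $\chi_\ell=\chi_{\ell'}$ rather than consecutive ones, whereas combining the layer-$\ell$ progression with the element $v_0^{\ell'}d$ yields a solution of the quadric only when $\ell'=\ell+1$ (or when the progression has length growing like $v_0^{\ell'-\ell}$, which again ruins the bound). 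Since that reduction is offered only as a fallback, the proposal stands, but the double-exponential bound must come from the direct argument for the augmented configuration, as in the paper.
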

\subsection*{Previous work}
Hitherto, little is recorded regarding the Ramsey number of general partition regular systems.  Cwalina--Schoen \cite{CwalinaSchoen} observe that one can use Gowers' bounds \cite{GowersSzem} in Szemer\'edi's theorem to obtain a bound which is tower in nature, of height proportional to $5r$.  Gowers' methods are well suited to delivering double exponential bounds for so-called translation invariant systems (such as arithmetic progressions), but such systems are far from typical.   In Cwalina--Schoen \cite{CwalinaSchoen}, Fourier-analytic arguments are adapted  to give an exponential bound on the Ramsey number of a single partition regular equation.   The first author \cite{chapman} has shown how multiplicatively syndetic sets allow one to reduce the tower height to $(1+o(1))r$ for the four-point Brauer configuration 
\begin{equation}\label{four point}
x,\ x+d,\ x+2d,\ d.
\end{equation}
\subsection*{Our method}
The approach underlying Theorem \ref{weak brauer theorem} generalises that of Roth \cite{roth} and Gowers \cite{GowersSzem}. Given a subset $A$ of $[N] := \set{1, 2, \dots, N}$ of density $\delta$, Roth uses a density increment procedure to locate a subprogression $P = a + q\cdot [M]$ of length $M \geq N^{\exp(-C/\delta)}$ where $A\cap P$ has density at least $\delta$ and is `Fourier uniform', in the sense that all but its trivial Fourier coefficients are small. An application of Fourier analysis (in the form of the circle method) shows that such sets possess of order $\delta^3 M^2$ three-term progressions. This yields a non-trivial three-term progression provided that $N \geq \exp\exp(C/\delta)$. 

The above application of the circle method relies crucially on the translation-dilation invariance of three-term progressions, so that the number of configurations in $P$ is the same as that in $[M]$.  Unfortunately, the Brauer configuration \eqref{four point} is not translation invariant. To overcome the lack of translation-invariance, given a colouring $A_1 \cup \dots \cup A_r = [N]$, we use Gowers' local inverse theorem for the uniformity norms \cite{GowersSzem} to run a density increment procedure with respect to the \emph{maximal translate density}
$$
\sum_{i=1}^r \max_a\frac{|A_i\cap (a + q\cdot [M])|}{M}.
$$ 
 This outputs (see Lemma \ref{thmUniBrau}) a homogeneous progression $q \cdot [M]$ such that for each colour class $A_i$ there is a translate $a_i + q\cdot [M]$ on which $A_i$ achieves its maximal translate density and on which $A_i$ is suitably uniform.  For the four-point Brauer configuration \eqref{four point}, the correct notion of uniformity is \emph{quadratic uniformity} (as measured by the Gowers $U^3$-norm).  
 
 Write $\alpha_i$ for the density of $A_i$ on the maximal translate $a_i + q\cdot[M]$ and $\beta_i$ for its density on the homogeneous progression $q\cdot [M]$.  An application of quadratic Fourier analysis shows that the number of four-point Brauer configurations \eqref{four point} satisfying
 $$
 \set{x, x+d, x+2d} \subset A_i \cap (a_i + q\cdot [M]) \quad \text{and}\quad d \in A_i \cap q\cdot [M]
 $$ 
 is of order $\alpha_i^3\beta_i M^2$.   By the pigeonhole principle, some colour class has $\beta_i  \geq 1/r$, which also implies that $\alpha_i \geq 1/r$, so we deduce that some colour class contains at least $r^{-4} M^2$ Brauer configurations.  Unravelling the quantitative dependence in our density increment then yields a double exponential bound on $N$ in terms of $r$.

In \S\ref{secSchurFF} we give a more detailed exposition of this method for the model problem of Schur's theorem in the finite vector space $\F_2^n$. In \S\ref{general} we generalise the argument to arbitrarily long Brauer configurations over the integers. We improve our bound for four-point Brauer configurations in \S\ref{four point sec}. Finally, in \S\ref{lefmann sec} we show how our methods give comparable bounds for the Ramsey number of certain quadratic equations.  

\subsection*{Notation}

The set of positive integers is denoted by $\N$. Given $x\geqslant 1$, we write $[x]:=\{1,2,...,\floor{x}\}$.  If $f$ and $g$ are functions, and $g$ takes only positive values, then we use the Vinogradov notation $f\ll g$ if there exists an absolute positive constant $C$ such that $|f(x)|\leqslant Cg(x)$ for all $x$. We also write $g\gg f$ or $f=O(g)$ to denote this same property.  The letters $C$ and $c$ are  used to denote absolute constants, whose values may change from line to line. Typically $C$ denotes a large constant $C>1$, whilst $c$ denotes a small constant $0<c<1$.

\subsection*{Acknowledgements}  We thank Tom Sanders for alerting us to the existence of \cite{SandersBootstrapping}, and for his generosity in synchronising release. We also thank Tom Sanders for his thorough and helpful comments. The second author thanks Ben Green for  suggesting this problem.

\section{Schur in the finite field model}\label{secSchurFF}

We illustrate the key ideas of our approach in proving Schur's theorem over $\F_{2}^{n}$. This asserts that, provided the dimension $n$ is sufficiently large relative to the number of colours $r$, any partition $\F_{2}^{n}=A_{1}\cup\cdots\cup A_{r}$ possesses a colour class $A_i$ containing vectors $x,y,z$ with $y \neq 0$ and such that $x+y=z$. The goal of this section is to obtain a quantitative bound on the dimension $n$  in terms of $r$. 

The argument of this section is purely expository, the resulting bound being slightly worse than that given by a standard application of Ramsey's theorem (see \cite[\S3.1]{ramseytheory}) or  Schur's original argument (see \cite{CwalinaSchoen}). We have since learned that the same ideas are discussed in Shkredov \cite[\S5]{Shkredov}.

\begin{theorem}[Schur in the finite field model]\label{model schur theorem}
	Consider a partition of $\F_{2}^{n}$ into $r$ sets $A_{1},...,A_{r}$. If $n$ satisfies
	\begin{equation*}
	n> \sqrt{2}\, r^{3}+\log_2(2r),
	\end{equation*} 
	then there exists $i\in[r]$ and $x,y,z\in A_{i}$ with $y\neq 0$ such that $x+y = z$.
\end{theorem}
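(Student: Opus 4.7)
The plan is to run a Roth-style density increment, with the potential being the \emph{maximal coset density} of each colour class rather than the trivially fixed average density. For $V\leq\F_{2}^{n}$ set
\[
\alpha_i(V):=\max_{a\in\F_{2}^{n}}\frac{|A_i\cap(a+V)|}{|V|},\qquad \beta_i(V):=\frac{|A_i\cap V|}{|V|},
\]
so that $\alpha_i(V)\geq\beta_i(V)$, $\sum_i\beta_i(V)=1$, and the potential $\sigma(V):=\sum_i\alpha_i(V)$ satisfies $\sigma(V)\in[1,r]$ and is monotone non-decreasing on passing to any subspace $V'\leq V$, because each coset of $V$ splits into cosets of $V'$ and the maximum can only grow.

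At each step, pigeonhole produces a class $i^{*}$ with $\beta_{i^{*}}(V)\geq 1/r$, hence $\alpha_{i^{*}}(V)\geq 1/r$; let $a$ realise this maximum and set $f(x):=1_{A_{i^{*}}}(a+x)$ and $g(x):=1_{A_{i^{*}}}(x)$ as functions on $V$, of densities $\alpha:=\alpha_{i^{*}}$ and $\beta:=\beta_{i^{*}}$. Schur configurations $x,x+d\in A_{i^{*}}\cap(a+V)$ with $d\in A_{i^{*}}\cap V$ and $d\neq 0$ are counted, up to an $O(\alpha|V|)$ diagonal from $d=0$, by Fourier inversion on $V$:
\[
\sum_{x,d\in V}f(x)f(x+d)g(d)=|V|^{-1}\sum_{\xi\in\hat V}|\hat f(\xi)|^{2}\hat g(\xi).
\]
The principal term $\alpha^{2}\beta|V|^{2}$ is at least $|V|^{2}/r^{3}$. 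If $f$ is $\eta$-uniform in the sense that $\max_{\xi\neq 0}|\hat f(\xi)|\leq\eta|V|$, then pulling this factor out and applying Cauchy--Schwarz together with Parseval controls the Fourier tail by $\eta\sqrt{\alpha\beta}|V|^{2}$. Choosing $\eta$ slightly below $\alpha^{3/2}\beta^{1/2}\leq 1/r^{2}$ and $|V|$ large enough to swallow the diagonal, the principal term dominates and yields a Schur triple inside $A_{i^{*}}$.

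Otherwise some $\xi\neq 0$ satisfies $|\hat f(\xi)|>\eta|V|$, and splitting $V$ along the codimension-one hyperplane $V':=\ker\xi$ produces a sub-coset of $a+V$ on which $A_{i^{*}}$ has density at least $\alpha+\eta$. Thus $\alpha_{i^{*}}(V')\geq\alpha_{i^{*}}(V)+\eta$, and since $\alpha_j(V')\geq\alpha_j(V)$ for every other $j$, the potential grows by at least $\eta$ while $\dim V$ drops by one. As $\sigma\in[1,r]$, the increment branch is triggered at most $(r-1)/\eta$ times; balancing $\eta\approx 1/(\sqrt{2}\,r^{2})$ against the counting threshold $|V|\gtrsim 2r$ delivers the iteration bound $\sqrt{2}\,r^{3}$ and hence the hypothesis $n>\sqrt{2}\,r^{3}+\log_{2}(2r)$.

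The hard part will be the calibration of $\eta$: it must be small enough that the Fourier error is strictly dominated by $\alpha^{2}\beta|V|^{2}$ after the diagonal is subtracted, yet large enough that the density increment terminates after $O(r^{3})$ steps. A feature peculiar to the multi-colour setting is that an increment localised to a single class must still push a \emph{global} potential upward; this is the reason for choosing $\sigma(V)=\sum_i\alpha_i(V)$, whose monotonicity under refinement of $V$ absorbs gains for $A_{i^{*}}$ without being compensated by losses elsewhere, rather than tracking a single $\alpha_{i^{*}}$ or the fixed quantity $\sum_i\beta_i(V)=1$.
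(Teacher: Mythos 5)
Your increment machinery coincides with the paper's: the potential $\sigma(V)=\sum_i\alpha_i(V)$ is exactly the maximal translate density $\Delta_H$, its monotonicity under passing to subspaces is Lemma \ref{subspace preservation lemma}, and the step ``large nonzero Fourier coefficient of the balanced function on the maximal coset $\Rightarrow$ density increment of $\eta$ on a coset of a codimension-one subspace'' is carried out verbatim inside the proof of Lemma \ref{lemSchurDich}. Where you diverge is the terminal (uniform) case, and that is where your calibration does not deliver the stated bound. You count Schur triples directly: main term $\alpha^{2}\beta|V|^{2}$, Fourier tail at most $\eta\sqrt{\alpha\beta}\,|V|^{2}$, and a $d=0$ diagonal of size up to $\alpha|V|$ that must be discarded because the theorem requires $y\neq 0$. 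With the worst-case densities $\alpha,\beta\approx 1/r$ supplied by the pigeonhole, and with $\eta\approx 1/(\sqrt{2}r^{2})$ (which you are forced to take so that the codimension loss stays below $\sqrt{2}r^{3}$), the main term minus the tail is only of order $|V|^{2}/r^{3}$, so beating the diagonal requires $|V|\gtrsim r^{2}$, not the ``$|V|\gtrsim 2r$'' you assert. Hence your route proves the result only under a hypothesis of the shape $n>\sqrt{2}r^{3}+2\log_{2}r+O(1)$, which is strictly stronger than the stated $n>\sqrt{2}\,r^{3}+\log_{2}(2r)$: the leading term is right, the lower-order term is not, and under the exact hypothesis of the theorem your final positivity can fail.

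The paper sidesteps this by never counting triples in the terminal case. With $A$ the copy of $A_i$ on its maximal coset and $B:=H\setminus(A_i-A_i)$, the identity $\sum_{x-x'=y}1_A(x)1_A(x')1_B(y)=0$ either produces a large Fourier coefficient (increment, with the same $\frac{1}{\sqrt 2 r^2}$ gain as yours) or forces $B$ to be small, i.e.\ $A_i-A_i$ covers a $\bigl(1-\frac{1}{2r}\bigr)$-fraction of $H$. This gives the sparsity--expansion dichotomy of Lemma \ref{lemSchurDich}, and the Schur triple then comes from inclusion--exclusion, $|A_i\cap(A_i-A_i)\cap H|\geq |A_i\cap H|+|(A_i-A_i)\cap H|-|H|>1$, where excluding $y=0$ costs only the ``$+1$''; that is precisely why $|H|>2r$, i.e.\ $n>\sqrt{2}\,r^{3}+\log_{2}(2r)$, suffices. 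Your direct-count finish (progression in the maximal translate, difference in the homogeneous part) is the strategy the paper uses over the integers in \S\ref{general}, and it does prove the qualitative statement with $n\gg r^{3}$; but to obtain the stated constant you must either accept the weaker lower-order term or replace the final counting step by this Bogolyubov-type dichotomy and inclusion--exclusion.
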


\begin{remark}[Distinctness of $x,y,z$]
One can guarantee that the $x,y,z\in A_{i}$ that are obtained are distinct and non-zero by introducing a new partition $\F_{2}^{n}=A_{0}'\cup A_{1}'\cup\cdots\cup A_{r}'$ by setting $A_{0}'=\{0\}$ and $A_{i}':=A_{i}\setminus\{0\}$ for all $i\in[r]$. Applying the above theorem (with $r$ replaced by $r+1$) to this new partition gives distinct non-zero $x,y,z\in A_{i}$ for some $i\in[r]$ satisfying $x+y=z$.
\end{remark}

Inspired by Cwalina--Schoen \cite{CwalinaSchoen}, we deduce Theorem \ref{model schur theorem} from the following dichotomy. This argument is a variant of Sanders's `$99$\% Bogolyubov theorem' \cite{SandersAdditive}, which asserts that the difference set of a dense set contains $99$\% of a subspace of bounded codimension.

\begin{lemma}[Sparsity--expansion dichotomy]\label{lemSchurDich}
	Let $A_1\cup \dots \cup A_r = \F_2^n$ be a partition of $\F_{2}^{n}$ into $r$ parts. Then there exists a subspace $H\leqslant\F_{2}^{n}$ with $\codim(H) \leqslant \sqrt{2}r^3$ such that for any $i \in [r]$ we have one of the two following possibilities.
	\begin{itemize}
		\item (Sparsity).  
		\begin{equation}\label{eqnSchurSparse}
			|A_i \cap H| < \trecip{r}|H|;
		\end{equation}
		\item (Expansion).  
		\begin{equation}\label{eqnSchurExp}
			|(A_i  - A_i) \cap H| \geq \brac{1 - \trecip{2r}} |H|.
		\end{equation}
	\end{itemize} 
\end{lemma}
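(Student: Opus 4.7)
The plan is to construct $H$ via an iterative density-increment procedure driven by the maximal coset density
\[
\tilde{\alpha}_i(H) := \max_{y \in \F_2^n / H} \frac{|A_i \cap (y + H)|}{|H|},
\]
with potential $\Phi(H) := \sum_{i=1}^{r} \tilde{\alpha}_i(H)$. Starting from $H_0 = \F_2^n$ (where $\Phi = 1$), at each stage I check whether the dichotomy holds on the current subspace $H_k$ for every colour class; if it does, I stop. Otherwise some $i$ has $|A_i \cap H_k|/|H_k| \geq 1/r$ yet $|(A_i - A_i) \cap H_k| < (1 - 1/(2r))|H_k|$, and I pass to a codimension-one subspace of $H_k$ chosen to increase $\Phi$ by a definite amount.

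To locate the correct character, let $y^{\ast}+H_k$ be a coset maximising $\tilde{\alpha}_i(H_k)$, let $B \subseteq H_k$ be the translate of $A_i \cap (y^{\ast}+H_k)$ down to $H_k$, and set $\alpha = |B|/|H_k| = \tilde{\alpha}_i(H_k) \geq 1/r$. Since $B-B \subseteq (A_i-A_i)\cap H_k$, expansion fails for $B$ as well. Cauchy--Schwarz applied to $|B|^2 = \sum_{z \in B-B} (1_B \ast 1_B)(z)$ combined with Plancherel on $H_k$ yields
\[
|B-B| \geq \frac{|H_k|}{1 + M^2 |H_k|/|B|^3}, \qquad M := \max_{\xi \neq 0} |\widehat{1_B}(\xi)|.
\]
Rearranging, the failure of expansion forces a nontrivial character $\xi$ with $M/|H_k| > \alpha^{3/2}/\sqrt{2r-1} \geq r^{-3/2}/\sqrt{2r-1}$. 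I then set $H_{k+1} := H_k \cap \xi^\perp$; splitting $y^{\ast}+H_k$ into its two $H_{k+1}$-cosets and reading off the density gap from $\widehat{1_B}(\xi)$ shows that the denser one is a coset of $H_{k+1}$ of $A_i$-density at least $\tilde{\alpha}_i(H_k) + M/|H_k|$. Hence $\tilde{\alpha}_i(H_{k+1}) \geq \tilde{\alpha}_i(H_k) + r^{-3/2}/\sqrt{2r-1}$.

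For every other colour class $j$, the quantity $\tilde{\alpha}_j$ is monotone non-decreasing under subspace refinement, since each $H_k$-coset is a union of two $H_{k+1}$-cosets, one of which is at least as dense as the parent. Hence $\Phi$ strictly increases by at least $r^{-3/2}/\sqrt{2r-1}$ per iteration and is bounded above by $r$, so the procedure terminates after at most $(r-1)\,r^{3/2}\sqrt{2r-1} \leq \sqrt{2}\,r^3$ steps, delivering the subspace of the required codimension. The main subtlety is that tracking $|A_i \cap H|/|H|$ directly would be problematic: a Fourier coefficient of the wrong sign drives the increment into the non-subspace coset of $H \cap \xi^\perp$. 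The maximal-coset formulation of the potential both sidesteps this issue and supplies the monotonicity needed to count iterations.
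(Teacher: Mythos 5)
Your proposal is correct and follows essentially the same route as the paper: a maximal-coset (maximal translate) density increment, with the failure of expansion converted via Fourier analysis into a large nontrivial character that yields a density gain of order $\alpha^{3/2}\beta^{1/2}$ on a codimension-one subspace, and the potential $\Phi\leq r$ bounding the codimension by $\sqrt{2}\,r^{3}$. The only cosmetic difference is that you extract the large Fourier coefficient from the standard Cauchy--Schwarz/energy lower bound on $|B-B|$, whereas the paper runs the equivalent computation with the counting operator against the complement set $H\setminus(A_i-A_i)$; the observation about wrong-sign coefficients forcing the increment into a nontrivial coset is exactly the paper's reason for using the maximal translate density.
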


The idea is that, as one of the colour classes $A_i$ is dense, its difference set $A_i - A_i$ must (by Sanders' result) contain 99\% of a `large' subspace $H$. Were $A_i$ itself to contain more that 1\% of this subspace, then we would be done, since then $(A_i - A_i)\cap A_i \neq \emptyset$ and we would obtain the desired Schur triple $x,y,z\in A_{i}$.  Unfortunately, this cannot always be guaranteed: consider the case in which $A_i$ is a non-trivial coset of a subspace of co-dimension 1.

To overcome this, we run Sanders' proof with respect to all of the colour classes simultaneously, constructing a subspace $H$ which is almost covered by $A_i - A_i$ for \emph{all} $i\in[r]$.  If such a $H$ were obtainable we would be done as before, since (by the pigeonhole principle) some colour class has large density on $H$.  Again, this is slightly too much to hope for, as sets which are `hereditarily sparse' cannot be good candidates for a 99\% Bogolyubov theorem.  Fortunately, such sets can be accounted for in our argument.

Before we proceed to the proof of Lemma \ref{lemSchurDich}, let us use this lemma to prove our finite field model of Schur's theorem.

\begin{proof}[Proof of Theorem \ref{model schur theorem}]
Let $H$ denote the subspace provided by the dichotomy.  By the pigeonhole principle, there exists some $A_i$ satisfying
$$
|A_i\cap H| \geqslant \trecip{r}|H|.
$$
Our assumption on the size of $n$ then implies that
$$
|A_i \cap H| > \trecip{2r}|H| + 1.
$$

Since $A_{i}$ is not sparse on $H$, in the sense of \eqref{eqnSchurSparse}, it must instead satisfy the expansion property \eqref{eqnSchurExp}.  By inclusion--exclusion
$$
|A_i\cap (A_i-A_i)\cap H| \geqslant |A_i \cap H| + |(A_i-A_i)\cap H| - |H| >1.
$$
In particular, the set $A_i\cap (A_i-A_i)$ contains a non-zero element.
\end{proof}

\subsection{A maximal translate increment strategy}

It remains to prove Lemma \ref{lemSchurDich}.  Following Sanders \cite{SandersAdditive}, we accomplish this via density increment.  We cannot merely increment the density of each individual colour class on translates of different subspaces, since our final dichotomy involves a single subspace $H$ which is uniform for all $A_i$.  We therefore have to increment a more subtle notion of density, namely the \emph{maximal translate density}
$$
\Delta_{H}=\Delta_H(A_1, \dots, A_r) := \sum_{i=1}^r \max_x \frac{|A_i \cap (x+H)|}{|H|}.
$$
This is a non-negative quantity bounded above by $r$.  It follows that a procedure passing to subspaces $H_0 \geq H_1 \geq H_2 \geq \dots$, which increments $\Delta_{H_i}$ by a constant amount at each iteration, must terminate in a constant number of steps (depending on $r$).  

We first observe that to increment $\Delta_{H}$ it suffices to find a subspace where one of the colour classes increases their maximal translate density.  To this end, write 
\begin{equation}\label{maximal translate density}
\delta_H(A) := \max_x \frac{|A \cap (x+H)|}{|H|}.
\end{equation}

\begin{lemma}[Maximal translate density is preserved on passing to subspaces]\label{subspace preservation lemma}
Let $H_1 \geq H_2$ be subspaces of $\F_{2}^{n}$.  Then for any $A\subset\F_{2}^{n}$ we have
$$
\delta_{H_2}(A) \geqslant \delta_{H_1}(A).
$$
\end{lemma}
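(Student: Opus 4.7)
The plan is to show that any translate of the larger subspace $H_1$ can be partitioned into translates of the smaller subspace $H_2$, and then to invoke pigeonhole on this partition.

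First I would choose $x_0 \in \F_2^n$ attaining the maximum in the definition of $\delta_{H_1}(A)$, so that
\[
\delta_{H_1}(A) = \frac{|A \cap (x_0 + H_1)|}{|H_1|}.
\]
Since $H_2 \leqslant H_1$, I can fix a set of coset representatives $y_1, \dots, y_m \in H_1$ for $H_2$ in $H_1$, where $m = [H_1 : H_2] = |H_1|/|H_2|$, giving the disjoint decomposition
\[
x_0 + H_1 = \bigsqcup_{j=1}^m (x_0 + y_j + H_2).
\]

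Next I would sum the intersections of $A$ with each piece. Because the union is disjoint,
\[
|A \cap (x_0 + H_1)| = \sum_{j=1}^m |A \cap (x_0 + y_j + H_2)|.
\]
Dividing through by $|H_1| = m |H_2|$ exhibits $\delta_{H_1}(A)$ as the average of the densities $\frac{|A\cap (x_0+y_j+H_2)|}{|H_2|}$. By pigeonhole, at least one index $j$ satisfies
\[
\frac{|A \cap (x_0 + y_j + H_2)|}{|H_2|} \geqslant \delta_{H_1}(A),
\]
and since this is a particular translate of $H_2$, the maximum $\delta_{H_2}(A)$ is at least this quantity, yielding the claim.

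There is no real obstacle here: the lemma is essentially the statement that conditional expectation of an indicator function, maximised over choice of translate, is monotone with respect to refinement of the partition into subspace cosets. The only thing to be slightly careful about is choosing the coset representatives inside $H_1$ so that the decomposition of $x_0 + H_1$ is a genuine disjoint union of $H_2$-translates; once this is done the averaging argument is immediate.
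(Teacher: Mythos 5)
Your proof is correct and follows essentially the same route as the paper: decompose the maximal $H_1$-translate into cosets of $H_2$ and average (the paper phrases the pigeonhole step as bounding the sum by $[H_1:H_2]$ times the maximum over $H_2$-translates, which is the same argument).
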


\begin{proof}
As $H_{2}\leq H_{1}$, we can write $H_{1}$ as a disjoint union of cosets of $H_{2}$. This means that we can find $V\subset H_{1}$ such that $H_1 = \sqcup_{y \in V} (y + H_2)$. Hence
$$
  |A \cap (x + H_1)| = \sum_{y \in V} |A \cap (x+y+H_2)| \leqslant |V| \max_z |A \cap (z+H_2)|.
$$
Choosing $x$ so that $A$ has maximal density on $x+H_1$ gives the result.
\end{proof}

We now prove Lemma \ref{lemSchurDich} using a density increment strategy for the maximal translate density. The argument proceeds by showing that if our claimed dichotomy does not hold, then we may pass to a subspace on which the colour classes have larger maximal translate density. 

The process of identifying such a subspace involves the use of Fourier analysis. Given a subspace $H\leq\F_{2}^{n}$ and a function $f:H\to\C$, we define the \emph{Fourier transform} $\hat{f}:\hat{H}\to\C$ of $f$ by
\begin{equation*}
\hat{f}(\gamma):=\sum_{x\in H}f(x)\gamma(x).
\end{equation*}
Here $\hat{H}$ denotes the \emph{dual group} of $H$, which is the group of homomorphisms $\gamma:H\to\C^\times$. Since every element of $H$ has order at most $2$, the value $\gamma(x)$ must be $\pm 1$ for all $x\in H$ and $\gamma\in\hat{H}$.

\begin{proof}[Proof of Lemma \ref{lemSchurDich}]
We proceed by an iterative procedure, at each stage of which we have a subspace $H = H^{(m)} \leq \F_{2}^n$ of codimension $m$ satisfying
$$
\Delta_{H^{(m)}} \geqslant \frac{m}{\sqrt{2}r^2}.
$$
We initiate this procedure on taking $H^{(0)} := \F_{2}^n$.  Since $\Delta_{H^{(m)}} \leqslant r$ this procedure must terminate at some $m \leqslant \sqrt{2}r^3$.

Given $H = H^{(m)}$ we define three types of colour class.
\begin{itemize}
\item  (Sparse colours).  $A_i$ is \emph{sparse} if 
$$
\delta_H(A_i) < \trecip{r};
$$
\item  (Dense expanding colours).  $A_i$ is \emph{dense expanding} if $\delta_H(A_i) \geqslant \trecip{r}$ and we have the expansion estimate
\begin{equation}\label{schur non expansion}
|(A_i  - A_i) \cap H| > \brac{1 - \trecip{2r}} |H|;
\end{equation}
\item  (Dense non-expanding colours).  $A_i$ is \emph{dense non-expanding} if it is neither sparse nor dense expanding.
\end{itemize}
If there are no dense non-expanding colour classes, then the dichotomy claimed in our lemma is satisfied, and we terminate our procedure.  Let us show how the existence of a dense non-expanding colour class $A_i$ allows the iteration to continue.

By the definition of maximal translate density, there exists $t$ such that
$$
|A_i \cap (t + H)| = \delta_H(A_i)|H|.
$$ 
We define dense subsets $A, B\subset H$ by taking
\begin{equation}\label{schur A B defn}
A := (A_i - t)\cap H \quad \text{and} \quad B := H \setminus \bigbrac{A_i - A_i}.
\end{equation}
Writing $\alpha$ and $\beta$ for the respective densities of $A$ and $B$ in $H$, our dense non-expanding assumption implies that $\alpha \geq 1/r$ and $ \beta \geqslant 1/(2r)$.  Moreover, it follows from our construction \eqref{schur A B defn} that 
$$
\sum_{x -x' = y} 1_A(x) 1_A(x') 1_B(y) = 0.
$$
Comparing this to the count
$$
\sum_{x -x' = y} \alpha 1_H(x) 1_A(x') 1_B(y) = \alpha^2\beta |H|^2,
$$
we deduce, on writing $f_{A}:=1_A - \alpha 1_H$, that
$$
\Bigabs{\E_{\gamma \in \hat{H}} \overline{\hat{f}_A(\gamma)} \hat{1}_A(\gamma) \hat{1}_B(\gamma)} = \biggabs{\sum_{x -x' = y} f_A(x) 1_A(x') 1_B(y)} \geqslant \alpha^2 \beta |H|^2.
$$

By Cauchy--Schwarz and Parseval's identity, there exists $\gamma \neq 1_H$ such that
$$
\Bigabs{\sum_{x \in H} f_A(x) \gamma(x)} \geqslant \frac{\alpha^2\beta}{\sqrt{\alpha\beta}} |H| \geqslant \frac{|H|}{\sqrt{2}r^2}.
$$
Partitioning $H$ into level sets of $\gamma$, gives
$$
\Bigabs{\sum_{ \gamma(x) = 1} f_A(x)} + \Bigabs{\sum_{ \gamma(x) = -1} f_A(x)} \geqslant \frac{|H|}{\sqrt{2}r^2}.
$$
Since $f_A$ has mean zero, we deduce that the two terms on the left of the above inequality are equal. This implies that there exists $\omega \in\set{ \pm 1}$ such that
$$
2\sum_{\gamma(x) = \omega} f_A(x) \geqslant \frac{|H|}{\sqrt{2}r^2}.
$$

Observe that, since $\gamma \neq 1_H$, the set $H' := \set{x \in H : \gamma(x) = 1}$ is a subspace of $H$ of codimension $1$. Hence on choosing  $y\in H$ with $\gamma(y)=\omega$ we have 
$$
\frac{|A \cap (y+H')|}{|H'|} \geqslant \alpha + \frac{1}{\sqrt{2}r^2}.
$$
By combining this with Lemma \ref{subspace preservation lemma} and our definition \eqref{schur A B defn} of $A$, we deduce that
$$
\Delta_{H'} \geqslant \Delta_H + \tfrac{1}{\sqrt{2}r^2} \qquad \text{and} \qquad \codim(H') = \codim(H) + 1.
$$
We have therefore established that our iteration may continue, completing the proof of the lemma.
\end{proof}

\section{Brauer configurations over the integers}\label{general}

In this section we use higher order Fourier analysis to study longer Brauer configurations and prove Theorem \ref{weak brauer theorem}. Henceforth, we fix the parameter $k\geqslant 2$ to denote the length of the progression in the Brauer configuration under consideration.  We emphasise that this section streamlines substantially if the reader is only interested in a double exponential bound in the colour aspect, as opposed to the more explicit bound \eqref{mainBound}.

Given finitely supported $f_{1},f_{2},...,f_{k},g:\Z\to\R$ we introduce the counting operator
\begin{equation}\label{counting op}
\Lambda(f_{1},f_{2},...,f_{k};g):=\sum_{d,x\in\Z}f_{1}(x)f_{2}(x+d)\cdots f_{k}(x+(k-1)d)g(d).
\end{equation}
For brevity, write $\Lambda(f;g):=\Lambda(f,f,...,f;g)$. For given finite sets $A,B\subset\N$, the number of arithmetic progressions of length $k$ in $A$ with common difference in $B$ is given by $\Lambda(1_{A};1_{B})$. 

\begin{lemma}\label{propBrauCount}
	Let $M\in\N$ with $M\geqslant k$. If $B\subset[M/(2k-2)]$, then
	\begin{equation*}
	\Lambda(1_{[M]};1_{B})\geqslant\trecip{2}|B|M.
	\end{equation*}
\end{lemma}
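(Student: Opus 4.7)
The proof should be an essentially direct computation, unpacking the definition of the counting operator.

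The plan is to fix $d \in B$, count the number of $x \in \Z$ for which $x, x+d, \dots, x+(k-1)d$ all lie in $[M]$, and then sum over $d$. For fixed positive $d$, the constraints reduce to $1 \leq x$ and $x + (k-1)d \leq M$, giving exactly $\max(M - (k-1)d,\, 0)$ admissible values of $x$. So
\begin{equation*}
\Lambda(1_{[M]}; 1_B) = \sum_{d \in B} \max\bigl(M - (k-1)d,\, 0\bigr).
\end{equation*}
Since $B \subset [M/(2k-2)]$, every $d \in B$ satisfies $(k-1)d \leq M/2$, and the maximum is redundant: $M - (k-1)d \geq M/2$. Summing this lower bound over all $d \in B$ yields $\Lambda(1_{[M]}; 1_B) \geq \tfrac{1}{2}|B|M$, as required.

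There is essentially no obstacle here; the only tiny thing to check is that elements of $B$ are positive (so that $d \geq 1$ and we do not pick up contributions from $d = 0$ that would need separate treatment), but this follows from the convention $B \subset \N$ implicit in $B \subset [M/(2k-2)]$. The hypothesis $M \geq k$ ensures the interval $[M/(2k-2)]$ is nonempty only when relevant and that $B$ can be nonempty; if $B = \emptyset$ the inequality is trivial.
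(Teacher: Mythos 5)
Your proof is correct and is essentially the same as the paper's: for each $d\in B$ the number of valid $x$ is $M-(k-1)d$, which is at least $M/2$ since $B\subset[M/(2k-2)]$, and summing over $d\in B$ gives the bound. The extra care about $d\geq 1$ and $B=\emptyset$ is fine but adds nothing beyond the paper's one-line computation.
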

\begin{proof}
	Since $B\subset[M/(2k-2)]$, we have $M-(k-1)d\geqslant M/2$ for all $d\in B$. Thus
	\begin{equation*}
	\Lambda(1_{[M]};1_{B})=\sum_{d\in B}(M-(k-1)d)\geqslant \trecip{2}|B|M.
	\end{equation*}
\end{proof}

\subsection{Gowers norms}

Gowers \cite{GowersNewFour} observed that arithmetic progressions of length  four or more are not controlled by ordinary (linear) Fourier analysis. Similarly, four-point Brauer configurations (and longer) require higher order notions of uniformity -- they have \emph{true complexity} greater than $1$ (see \cite{GowersWolfTrue} for further details).  To overcome this difficulty, Gowers introduced a sequence of norms which can be used to measure the higher order uniformity of sets and functions.

\begin{definition}[$U^{d}$ norms]
	Let $f:\Z\to\R$ be a finitely supported function. For each $d\geqslant 2$, the $U^{d}$ \emph{norm} $\lVert f\rVert_{U^{d}}$ of $f$ is defined by
	\begin{equation}\label{eqnFinGowNorm}
	\lVert f\rVert_{U^{d}} :=\left( \sum_{x\in \Z}\sum_{\vh\in \Z^{d}}\,\Delta_{h_1, \dots, h_d}f(x)\right)  ^{1/2^{d}},
	\end{equation}
	where the \emph{difference operators} $\Delta_{h}$ are defined inductively by
	\begin{equation*}
	\Delta_{h}f(x):=f(x)f(x+h)
	\end{equation*}
	and
	\begin{equation*}
	\Delta_{h_1, \dots, h_d}f:=\Delta_{h_{1}}\Delta_{h_{2}}\cdots\Delta_{h_{d}}f.
	\end{equation*}
\end{definition}

\begin{remark}
	In the literature, and in Gowers' original paper, it is common to  work with functions $f:\Z/p\Z\to\R$, defining $\lVert f\rVert_{U^{d}(\Z/p\Z)}$ by summing over $x\in\Z/p\Z$ and $\vh\in(\Z/p\Z)^{d}$ in \eqref{eqnFinGowNorm}. Given a prime $p>N$, one can embed the interval $[N]$ into $\Z/p\Z$ by reduction modulo $p$. This allows us to identify a function $f:[N]\to\R$ with an extension $\tilde{f}:\Z/p\Z\to\R$ on taking $\tilde{f}(x)=0$ for all $x\in(\Z/p\Z)\setminus[N]$. One can observe that if $p>2(d+1) N$, then $\lVert f\rVert_{U^{d}}=\lVert \tilde{f}\rVert_{U^{d}(\Z/p\Z)}$. This is due to the fact that the interval $[N]\subset\Z$ and the embedding of $[N]$ into $\Z/p\Z$ are \emph{Freiman isomorphic of order $d+1$} (see \cite[\S5.3]{TaoVu} for further details).
\end{remark}
We note that if $f:\Z\to[-1,1]$ is supported on $[N]$, then
\begin{equation}\label{norm bounds}
\trecip{2} N^{\frac{d+1}{2^{d}}} \leqslant \norm{f}_{U^d} \leqslant  N^{\frac{d+1}{2^{d}}}.
\end{equation}
The lower bound follows from inductively applying the Cauchy--Schwarz inequality in the form
$$
\norm{f}_{U^{d}} = \brac{\sum_{h_1, \dots, h_{d-1}}\abs{\sum_x \Delta_{h_1, \dots, h_{d-1}} f(x)}^2}^{1/2^d} \geq (2N)^{-\frac{d-1}{2^{d}}}\norm{f}_{U^{d-1}},
$$
the factor of 2 resulting from the observation that if $\supp(f) \subset [N]$, then the $h_i$ in \eqref{eqnFinGowNorm} contribute only if $h_i \in (-N, N)$.
The upper bound is a consequence of the fact that  $\norm{f}_{U^d}^{2^d}$ counts the number of solutions to a system of $2^d-d-1$ independent linear equations in $2^d$ variables, each weighted by $f$.
\begin{definition}[Uniform of degree $d$]\label{uniform def}
We say that $A \subset [N]$ is \emph{$\eps$-uniform of degree $d$} if
\begin{equation*}
\lVert 1_{A} - \E_{[N]}(1_A) 1_{[N]}\rVert_{U^{d+1}}\leqslant \eps\lVert 1_{[N]}\rVert_{U^{d+1}},
\end{equation*}
where $\E_{[N]}(1_A) := |A\cap [N]|/N$ denotes the density of $A$ on $[N]$.
More generally, given $P\subset[N]$, we say that $A$ is \emph{$\eps$-uniform of degree $d$ on $P$} if
\begin{equation*}
\lVert 1_{A} - \E_{P}(1_A) 1_{P}\rVert_{U^{d+1}}\leqslant \eps\lVert 1_{P}\rVert_{U^{d+1}}.
\end{equation*}
\end{definition}
Gowers showed that one can study sets which lack arithmetic progressions of length $k$ by considering their uniformity. If a set has density $\alpha$ in $[N]$ and is $\eps$-uniform of degree $k-2$, for some small $\eps = \eps(k, \alpha)$, then $A$ contains a proportion of $\alpha^k$ of the total progressions of length $k$ in the interval $[N]$.  Hence the only way a uniform set can lack $k$-term progressions is if it has few elements.

A similar result holds for Brauer configurations, see for instance \cite[Appendix C]{GreenTaoLinear}.  In order to avoid the introduction of an (admittedly harmless) absolute constant resulting from the passage to a cyclic group, we give the simple proof.

\begin{lemma}[Generalised von Neumann for $\Lambda$]\label{lemGenVon}
	Let $f_{1},...,f_{k},g:[N]\to[-1,1]$. Then for each $j\in[k]$ we have
	\begin{equation*}
	|\Lambda(f_{1},...,f_{k};g)|\leqslant N^{2}\left(\frac{\lVert f_{j}\rVert_{U^{k}}^{2^{k}}}{N^{k+1}}\right)^{1/2^{k}} \left( \frac{\lVert g\rVert_{U^{k}}^{2^{k}}}{N^{k+1}}\right) ^{1/2^{k}}.
	\end{equation*}
	\end{lemma}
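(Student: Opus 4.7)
I would prove this by an iterated Cauchy--Schwarz argument -- a `generalised von Neumann' estimate for the Brauer configuration, designed to keep two functions in play so that the final bound involves $\lVert f_j \rVert_{U^k}$ and $\lVert g \rVert_{U^k}$ simultaneously.  The Brauer configuration comprises $k+1$ pairwise linearly independent linear forms $x, x+d, \ldots, x+(k-1)d, d$ in the two variables $(x,d)$, so its Cauchy--Schwarz complexity is $k-1$, and $U^k$-norm control is the natural analytic input.  Following the remark after Definition \ref{uniform def}, I would first embed $[N]$ into $\Z/p\Z$ for a prime $p > 2(k+1)N$: this preserves the relevant Gowers norms while making nonzero linear substitutions on the shift parameters into genuine bijections.

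Fix $j\in[k]$, say $j=1$ for concreteness (the general case follows by a linear change of variable in $(x,d)$). The plan is to perform $k-1$ rounds of Cauchy--Schwarz, each peeling off one of $f_2,\dots,f_k$: the $i$-th round selects a remaining $f_\ell$, brings out its $\ell^2$-norm (bounded by $N^{1/2}$), expands the resulting square, and re-indexes to introduce a new shift parameter $h_i$. Further Cauchy--Schwarzes on the accumulated shift variables are interleaved in order to preserve the $\ell^2$-structure. After the $k-1$ rounds one arrives at an inequality of the shape
\[
|\Lambda|^{2^{k-1}} \;\le\; N^{\alpha}\sum_{h_1,\dots,h_{k-1}} \Bigl(\sum_x \Delta_{h'_1,\dots,h'_{k-1}} f_1(x)\Bigr)\Bigl(\sum_d \Delta_{h''_1,\dots,h''_{k-1}} g(d)\Bigr),
\]
where $(h'_1,\dots,h'_{k-1})$ and $(h''_1,\dots,h''_{k-1})$ are invertible linear images of $(h_1,\dots,h_{k-1})$ in $(\Z/p\Z)^{k-1}$ arising from the changes of variable, and $\alpha$ is a tracked $N$-power.

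A final Cauchy--Schwarz in $(h_1,\dots,h_{k-1})$ decouples the two factors, and the identity
\[
\lVert f\rVert_{U^k}^{2^k} \;=\; \sum_{h_1,\dots,h_{k-1}}\Bigl(\sum_x \Delta_{h_1,\dots,h_{k-1}} f(x)\Bigr)^2
\]
(obtained by absorbing an additional $h_k$-variable in the definition of the Gowers norm), together with the bijectivity of the aforementioned substitutions in $\Z/p\Z$, identifies the resulting factors as $\lVert f_1\rVert_{U^k}^{2^{k-1}}$ and $\lVert g\rVert_{U^k}^{2^{k-1}}$. Squaring and checking that $2\alpha = 2 \cdot 2^k - 2(k+1)$ by direct bookkeeping gives the claimed inequality. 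The main technical obstacle is the bookkeeping: ensuring at each round that the newly introduced $h_i$ acts non-trivially on both $f_1$ and $g$ (so that the final substitutions are invertible, which is where the pairwise linear independence of the Brauer forms is used), and tracking the precise $N$-power accumulation through the $f_i$-eliminations and the intermediate Cauchy--Schwarzes on the shift variables.
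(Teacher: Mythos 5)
Your proposal is correct and essentially reproduces the paper's argument: $k-1$ applications of Cauchy--Schwarz that peel off the remaining $f_i$'s (accumulating the stated power $N^{2^k-k-1}$ at the level of $|\Lambda|^{2^{k-1}}$), one final Cauchy--Schwarz in the shift variables $\vh$, and identification of the two resulting factors with $\lVert f_j\rVert_{U^k}^{2^{k-1}}$ and $\lVert g\rVert_{U^k}^{2^{k-1}}$. The only (harmless) deviation is at the last step: you pass to $\Z/p\Z$ so that the dilated shift parameters such as $(k-1)h_1,\dots,h_{k-1}$ become bijective substitutions, whereas the paper stays in $\Z$ and simply extends the $\vh$-summation using the nonnegativity of the squared inner sums, precisely to avoid any detour through a cyclic group.
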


\begin{proof}	We prove the case where $j=k$. The other cases follow on performing a change of variables $x'=x+id$ preceding each application of the Cauchy-Schwarz inequality.
	
	Applying the Cauchy-Schwarz inequality with respect to the $x$ variable shows that $|\Lambda(f_1,...,f_{k};g)|$ is bounded above by
	\begin{align*}
	\left(\sum_{x\in\Z}|f_{1}(x)|^{2}\right)^{1/2}\left(\sum_{x,d,d'\in\Z}g(d)g(d')\prod_{i=1}^{k-1}f_{i+1}(x+id)f_{i+1}(x+id')\right)^{1/2}.
	\end{align*}
	Using the fact that $|f_{1}(x)|\leqslant 1_{[N]}(x)$ holds for all $x\in\Z$, and by performing a change of variables $d'=d+h$, we deduce that
	\begin{equation*}
	|\Lambda(f_{1},...,f_{k};g)|^{2}\leqslant N\sum_{x,h\in\Z}\sum_{d\in\Z}\Delta_{h}g(d)\prod_{i=1}^{k-1}\Delta_{ih}f_{i+1}(x+id).
	\end{equation*}
	By applying the Cauchy-Schwarz inequality a further $k-2$ times, each time with respect to all variables except for $d$, we see that $|\Lambda(f_1,...,f_{k};g)|^{2^{k-1}}$ is bounded above by
	\begin{equation*} N^{2^{k}-k-1}\sum_{\vh\in\Z^{k-1}}\sum_{x\in\Z}\Delta_{(k-1)h_1, (k-2)h_2, \dots, h_{k-1}}f_{k}(x)  \sum_{d\in\Z}\Delta_{h_1, \dots, h_{k-1}}g(d).  
	\end{equation*}
	By applying Cauchy-Schwarz with respect to the $\vh$ variable, the above sum is at most $ S^{1/2}\lVert g\rVert_{U^{k}}^{2^{k-1}}$, where $S$ is equal to
	\begin{equation*}
	\sum_{\vh\in\Z^{k-1}}\left\lvert\sum_{x\in\Z}\Delta_{(k-1)h_1, (k-2)h_2, \dots, h_{k-1}}f_{k}(x)\right\rvert^{2}.
	\end{equation*}
	Since the terms in the sum over $\vh$ are non-negative, we can extend the  summation from $\Z^{k-1}$ to $(k-1)^{-1}\cdot \Z \times (k-2)^{-1}\cdot \Z \times \dots \times \Z$, yielding the lemma.
\end{proof}

\begin{corollary}[$U^{k}$ controls $\Lambda$]\label{corUkCont}
	Let $f_1, f_2,g:[N]\to[0,1]$. Then
	\begin{equation*}
	|\Lambda(f_{1};g)-\Lambda(f_{2};g)|\leqslant kN^{2}\frac{\lVert f_{1}-f_{2}\rVert_{U^{k}}}{N^{(k+1)2^{-k}}}.
	\end{equation*}
\end{corollary}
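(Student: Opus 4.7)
The proof proceeds by a standard telescoping decomposition followed by an application of the generalised von Neumann estimate. The plan is as follows.

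First, I would exploit the multilinearity of $\Lambda$ in its first $k$ slots to write
\begin{equation*}
\Lambda(f_1;g) - \Lambda(f_2;g) = \sum_{j=1}^{k} \Lambda\bigl(\underbrace{f_2,\dots,f_2}_{j-1},\, f_1-f_2,\, \underbrace{f_1,\dots,f_1}_{k-j}; g\bigr),
\end{equation*}
which is just the identity $a^{k} - b^{k} = \sum_{j} b^{j-1}(a-b)a^{k-j}$ applied slot-wise.

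Next, I would bound each of the $k$ summands by applying Lemma \ref{lemGenVon} with the chosen index $j$ placed at the slot where $f_1-f_2$ sits. The remaining $k-1$ functions in each term are either $f_1$ or $f_2$, which are $1$-bounded by hypothesis; the function $g$ is also $1$-bounded. Inspection of the proof of Lemma \ref{lemGenVon} reveals that only the functions in the \emph{other} slots need to be $1$-bounded (the $L^\infty$-bound on $f_j$ itself is never used), so the lemma applies directly with $f_j := f_1 - f_2$ to yield
\begin{equation*}
\bigabs{\Lambda(f_2,\dots,f_2,f_1-f_2,f_1,\dots,f_1;g)} \leq N^{2}\left(\frac{\lVert f_{1}-f_{2}\rVert_{U^{k}}^{2^{k}}}{N^{k+1}}\right)^{1/2^{k}} \left( \frac{\lVert g\rVert_{U^{k}}^{2^{k}}}{N^{k+1}}\right)^{1/2^{k}}.
\end{equation*}

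To dispose of the factor involving $g$, I would use the trivial pointwise bound $|g| \leq 1_{[N]}$ together with the upper estimate $\lVert 1_{[N]} \rVert_{U^{k}} \leq N^{(k+1)/2^{k}}$ from \eqref{norm bounds}, so that the last factor above is at most $1$. Summing the resulting bound over the $k$ telescoping terms then delivers
\begin{equation*}
|\Lambda(f_{1};g)-\Lambda(f_{2};g)|\leq k N^{2}\,\frac{\lVert f_{1}-f_{2}\rVert_{U^{k}}}{N^{(k+1)2^{-k}}},
\end{equation*}
as required. The only genuinely delicate point is checking that Lemma \ref{lemGenVon} genuinely applies when the function at the distinguished slot fails to be $1$-bounded; should one prefer to apply the lemma as stated, one may instead apply it to the $1$-bounded function $(f_1-f_2)/2$ and absorb the resulting factor of $2$ into the constant, at the cost of replacing $k$ by $2k$. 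Since this is a minor loss, I would simply note that the same proof of Lemma \ref{lemGenVon} works in this slightly stronger form.
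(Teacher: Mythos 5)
Your proof is correct and is essentially the paper's own argument: telescope $\Lambda(f_{1};g)-\Lambda(f_{2};g)$ into $k$ terms via multilinearity, apply Lemma \ref{lemGenVon} with the distinguished index at the slot occupied by $f_{1}-f_{2}$, and bound the $g$-factor by $N^{(k+1)/2^{k}}$ using the $1$-boundedness and support of $g$. Your additional observation that the proof of Lemma \ref{lemGenVon} never uses an $L^\infty$ bound on the distinguished function (so the $2$-boundedness of $f_{1}-f_{2}$ causes no loss in the constant $k$) is accurate and in fact tidies up a point the paper's proof passes over silently.
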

\begin{proof}
	Observe that $\Lambda(f_{1};g)-\Lambda(f_{2};g)$ can be written as the sum of $k$ terms
	\begin{equation*}
	\Lambda(f_{1}-f_{2},f_{1},...,f_{1};g)+\Lambda(f_{2},f_{1}-f_{2},f_{1},...,f_{1};g)+\cdots+\Lambda(f_{2},...,f_{2},f_{1}-f_{2};g).
	\end{equation*}
	Recall from (\ref{norm bounds}) that $\lVert g\rVert_{U^{k}}^{2^{k}}\leqslant N^{k+1}$. Since $f_{1}-f_{2}$ takes values in $[-1,1]$, the result now follows from the triangle inequality and Lemma \ref{lemGenVon}.
\end{proof}

Lemma \ref{propBrauCount} shows us that, for any non-empty $B\subset[N/(2k-2)]$ and $\alpha>0$, we have
\begin{equation*}
\Lambda(\alpha 1_{[N]};1_{B})\geqslant \tfrac{1}{2}\alpha^{k}|B|N.
\end{equation*}
 Combining this with Corollary \ref{corUkCont} we see that, if $A\subset[N]$ has density $\alpha>0$ and is $\eps$-uniform of degree $k-1$ for some `very small' $\eps>0$, then the difference
\begin{equation*}
|\Lambda(1_{A};1_{B})-\Lambda(\alpha 1_{[N]};1_{B})|
\end{equation*}
is also small. This then implies that $A$ contains an arithmetic progression of length $k$ with common difference in $B$. Hence sets $A$ lacking such arithmetic progression  cannot be uniform. A key observation of Gowers is that this lack of uniformity implies that the set $A$ exhibits significant bias towards a long arithmetic progression inside $[N]$. \\[10pt]
\noindent\textbf{Gowers' density increment lemma.} \emph{Let $d\geqslant 1$ and $0 < \eps \leq \trecip{2}$.  Suppose that $A\subset[N]$ is not $\eps$-uniform of degree $d$ as in Definition \ref{uniform def}.
	Then, on setting 
	\begin{equation}\label{GowersBd}
\eta=\eta(d,\eps) :=  \recip{4}\brac{\frac{\eps}{8(d+2)}}^{2^{d +1+ 2^{d+10}}},
	\end{equation}
	there exists an arithmetic progression $P\subset[N]$ such that
	\begin{equation*}
	|P|\geqslant \eta N^{\eta} \quad\text{and}\quad
	\frac{|A\cap P|}{|P|}\geqslant \frac{|A|}{N}+\eta.
	\end{equation*}
}
\begin{proof}

Let $p$ be a prime in the interval $2(d+2) N < p \leq 4(d+2) N$, so that on setting $f := 1_A - \E_{[N]}(1_A) 1_{[N]}$ and viewing this as a function on $\Z/p\Z$ the lower bound in \eqref{norm bounds} gives 
$$
\sum_{\vh \in (\Z/p\Z)^{d}} \abs{\sum_{x\in\Z/p\Z} \Delta_h f(x)}^2 \geq \brac{ \tfrac{\eps }{8(d+2)}}^{2^{d+1}} p^{d+2}.
$$
Hence, according to Gowers' \cite[p.478]{GowersSzem} definition of $\alpha$-uniformity, $f$ is not $\alpha$-uniform of degree $d$ on $\Z/p\Z$ with 
$$
\alpha = \brac{ \tfrac{\eps }{8(d+2)}}^{2^{d+1}}.
$$
Let $\beta := \alpha^{2^{2^{d+10}}}$. Applying Gowers' local inverse theorem for the $U^{d+1}$-norm \cite[Theorem 18.1]{GowersSzem} there exists a partition of $\Z/p\Z$ into  (integer) arithmetic
progressions $P_1$, \dots, $P_M$ with $M\leq p^{1-\beta}$ and such that 
$$
\sum_{j=1}^M \biggabs{\sum_{x\in P_j} f(x)}  \geq \beta p.
$$
Since $f$ is supported on $[N]$, we may assume that $P_j \subset [N]$ for all $j$.  
As $f$ has mean zero we may apply (the proof of) \cite[Lemma 5.15]{GowersSzem} to obtain a progression $P \subset [N]$ with $|P| \geq \trecip{4}\beta p^\beta$ which also satisfies
$$
\sum_{x\in P} f(x) \geq  \trecip{4}\beta |P|.
$$
\end{proof}

\subsection{Maximal translate density}
As in the previous section, we prove Theorem \ref{weak brauer theorem} by a maximal translate density increment argument.
For $q, M \in \N$ and $A\subset\Z$, define the \emph{maximal translate density }
$$
\delta_{q, M}(A) := \max_{x\in\Z} \frac{|A \cap (x+q\cdot[M])|}{M}.
$$
Given a collection of non-empty subsets $A_{1},...,A_{r}\subset[N]$, we collate their densities into the quantity
$$
\Delta(q, M) = \Delta(q, M;\{A_{i}\}_{i=1}^{r}) :=  \sum_{i=1}^r \delta_{q, M}(A_i).
$$
We write $\Delta(q,M)$ when it is clear from the context which collection of sets $\{A_{i}\}_{i=1}^{r}$ we are working with. 

In the previous section, where we worked with subspaces of $\F_{2}^{n}$, we showed (Lemma \ref{subspace preservation lemma}) that the maximal translate density does not decrease when passing to a subspace. This is no longer true when passing to subprogressions in $\Z$. However, we can still increment $\Delta(q,M)$ if the subprogression we pass to is not too long.

\begin{lemma}[Approximately preserving max translate density]\label{approximately preserving} Given positive integers $M ,M_1, q, q_1$ and a finite set $A\subset\Z$, we have
	\begin{equation*}
	\delta_{qq_{1},M_{1}}(A)\geqslant\delta_{q,M}(A)\left( 1-\tfrac{q_{1}M_{1}}{M}\right) .
	\end{equation*}
\end{lemma}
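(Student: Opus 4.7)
The plan is to reduce the two-dimensional question about translates of $qq_1 \cdot [M_1]$ to a one-dimensional question about translates of $q_1 \cdot [M_1]$, and then to execute a short double-counting argument. First I would choose $x^* \in \Z$ achieving the maximum in $\delta := \delta_{q, M}(A)$ (attained because $A$ is finite) and set $B := \{n \in \Z : x^* + qn \in A\}$, so that $|B \cap [M]| = \delta M$. The key observation is that for any $a \in \Z$ the translate $(x^* + qa) + qq_1 \cdot [M_1] = x^* + q(a + q_1 \cdot [M_1])$ satisfies
\[
\bigabs{A \cap \bigbrac{(x^* + qa) + qq_1 \cdot [M_1]}} = \bigabs{B \cap (a + q_1 \cdot [M_1])},
\]
so it suffices to find $a \in \Z$ with $|B \cap (a + q_1 \cdot [M_1])| \geq \delta M_1 (1 - q_1 M_1 / M)$.

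Next I would execute the double count. Each $b \in B \cap [M]$ lies in exactly $M_1$ of the translates $a + q_1 \cdot [M_1]$, namely those with $a = b - q_1 m$ for $m \in [M_1]$, and all these $a$-values fall in the integer interval $I := \{1 - q_1 M_1, \dots, M - q_1\}$ of cardinality $M + q_1 (M_1 - 1)$. Summing gives
\[
\sum_{a \in I} \bigabs{B \cap (a + q_1 \cdot [M_1])} \geq \sum_{a \in I} \bigabs{B \cap [M] \cap (a + q_1 \cdot [M_1])} = M_1 \cdot |B \cap [M]| = \delta M M_1,
\]
and the pigeonhole principle produces some $a^* \in I$ with $|B \cap (a^* + q_1 \cdot [M_1])| \geq \delta M M_1 / (M + q_1(M_1 - 1))$.

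To close the loop, I would apply the elementary bound $1/(1+y) \geq 1 - y$ valid for $y \geq 0$, with $y = q_1(M_1 - 1)/M$, which yields $M/(M + q_1(M_1 - 1)) \geq 1 - q_1(M_1 - 1)/M \geq 1 - q_1 M_1 / M$. Dividing the pigeonhole bound by $M_1$ and setting $x := x^* + q a^*$ then gives the claim.

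The main subtlety lies in the choice of the range $I$. The more obvious attempt, averaging over $a \in \{0, 1, \dots, M - q_1 M_1\}$ and using inclusion-exclusion to bound each summand from below by $\delta M - q_1 M_1 + 1$, fails: this estimate becomes vacuous (or even negative) whenever $\delta$ is small. Allowing $a$ to range over the larger interval $I$ sidesteps this by producing a clean identity in which each element of $B \cap [M]$ contributes exactly $M_1$ times, so the total depends linearly on $|B \cap [M]|$ with no inclusion-exclusion loss.
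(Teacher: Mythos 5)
Your proof is correct and follows essentially the same route as the paper: restrict to the translate on which $A$ attains its maximal density, double count memberships in translates of the finer progression (each element lying in exactly $M_1$ of them, with the relevant translates confined to an interval of length about $M+q_1M_1$), apply the pigeonhole principle, and conclude via $\frac{M}{M+q_1M_1}\geq 1-\frac{q_1M_1}{M}$. The only cosmetic difference is that you rescale by $q$ to work with the set $B\subset\Z$ rather than with $\tilde A=A\cap(t+q\cdot[M])$ directly, which changes nothing of substance.
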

\begin{proof}
	By definition of $\delta_{q,M}$, we can find $t\in\Z$ such that
	\begin{equation*}
	\delta_{q,M}(A)M=|A\cap (t+q\cdot[M])|.
	\end{equation*}
	Let $\tilde{A}:=A\cap (t+q\cdot[M])$. Note that
	\begin{equation*}
	\delta_{q,M}(\tilde{A})M=|\tilde{A}|=\delta_{q,M}(A)M.
	\end{equation*}
	Now observe that the collection of translates $\{x+qq_{1}\cdot[M_{1}]:x\in\Z \}$ covers $\Z$, and each integer $m\in\Z$ lies in exactly $M_{1}$ such translates. This gives
	\begin{equation}\label{eqnSumTildeC}
	\sum_{x\in\Z}|\tilde{A}\cap (x+qq_{1}\cdot[M_{1}])|=|\tilde{A}|M_{1}=\delta_{q,M}(A)MM_{1}.
	\end{equation}
	Let $\Omega$ be given by
	\begin{equation*}
	\Omega:=\{x\in\Z: \tilde{A}\cap (x+qq_{1}\cdot[M_{1}])\neq\emptyset\}.
	\end{equation*}
	Now suppose $x\in\Omega$. Since $\tilde{A}\subset t+q\cdot[M]$, we can find $u\in[M]$ and $u_{1}\in[M_{1}]$ such that
	$
	x-t=q(u-q_{1}u_{1}).
	$
	From this we see that
	\begin{equation*}
	(x-t)\in[q(1-q_{1}M_{1}),q(M-q_{1})]\cap (q\cdot\Z).
	\end{equation*}
	We therefore deduce that
	$
	|\Omega|\leqslant M+q_{1}M_{1}.
	$
	Applying the pigeonhole principle to (\ref{eqnSumTildeC}), we conclude that
	\begin{equation*}
	\delta_{qq_{1},M_{1}}(A)\geqslant\delta_{qq_{1},M_{1}}(\tilde{A})\geqslant\delta_{q,M}(A)\tfrac{M}{M+q_{1}M_{1}}. 
	\end{equation*}
	This implies the desired bound.
\end{proof}

\begin{corollary}[Subprogression density increment]\label{brauer preservation}
	Let $M,M_{1},q,q_{1}\in\N$, and let $A_{1},...,A_{r}\subset[N]$ be non-empty sets. If $\delta_{qq_1, M_1}(A_i) \geqslant \delta_{q, M}(A_i) + \eta$ for some $i\in[r]$ and some $\eta>0$, then
	$$
	\Delta(qq_1, M_1) \geqslant \Delta(q, M) -\tfrac{q_{1}M_{1}}{M}r+ \eta.
	$$
\end{corollary}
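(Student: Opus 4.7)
The plan is to deduce this directly from Lemma \ref{approximately preserving} applied term-by-term to the $r$ summands defining $\Delta$. For the colour class $i$ singled out by the hypothesis, we already have the increment $\delta_{qq_1, M_1}(A_i) \geq \delta_{q, M}(A_i) + \eta$. For every other colour class $j \neq i$, I will apply Lemma \ref{approximately preserving} with $A = A_j$ to obtain
\[
\delta_{qq_1, M_1}(A_j) \geq \delta_{q, M}(A_j)\bigbrac{1 - \tfrac{q_1 M_1}{M}}.
\]
Since $\delta_{q, M}(A_j) \leq 1$, the right-hand side is at least $\delta_{q, M}(A_j) - \tfrac{q_1 M_1}{M}$, converting the multiplicative loss into an additive loss (this is the step that explains the shape of the error term in the conclusion).

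Summing the bound for $A_i$ together with the bounds for the other $r-1$ colour classes yields
\[
\Delta(qq_1, M_1) \geq \Delta(q, M) + \eta - (r-1)\tfrac{q_1 M_1}{M} \geq \Delta(q, M) + \eta - r\tfrac{q_1 M_1}{M},
\]
which is the claimed inequality. There is no real obstacle here: the only mild subtlety is the decision to bound each non-distinguished summand by the additive form $\delta - \tfrac{q_1 M_1}{M}$ rather than keeping the multiplicative form, which ensures the error term appears as a clean subtraction $\tfrac{q_1 M_1}{M} r$ rather than being entangled with the densities $\delta_{q,M}(A_j)$.
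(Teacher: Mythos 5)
Your proof is correct and is essentially the paper's intended argument: the corollary follows from Lemma \ref{approximately preserving} applied to each colour class $A_j$ with $j\neq i$, using $\delta_{q,M}(A_j)\leq 1$ to pass from the multiplicative loss to the additive loss $q_1M_1/M$, and then summing. The only cosmetic difference is that you lose $(r-1)q_1M_1/M$ rather than $rq_1M_1/M$, which is of course at least as good as the stated bound.
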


The following lemma allows us to pass to a subprogression whose common difference and length are sufficiently small to allow for an effective employment of Corollary \ref{brauer preservation}. 
\begin{lemma}\label{preserving density}
Let $q$ and $M$ be positive integers with $M \leq 2\floor{N/q}$.  For any $A \subset [N]$ there exists an arithmetic progression $P$ of common difference $q$ and length $|P| \in [\trecip{2}M, M]$ such that
$$
\frac{|A \cap P|}{|P|} \geq \frac{|A\cap [N]|}{N}.
$$
\end{lemma}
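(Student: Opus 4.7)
The plan is a straightforward averaging argument: I will partition $[N]$ into arithmetic progressions of common difference $q$ with every block having length in $[M/2, M]$, and then apply the pigeonhole principle.

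To build the partition, I would first decompose $[N]$ into its $q$ residue classes modulo $q$. Each such class is an arithmetic progression of common difference $q$ with length either $\lfloor N/q\rfloor$ or $\lfloor N/q \rfloor + 1$; the hypothesis $M \leq 2 \lfloor N/q \rfloor$ is precisely what ensures every class has at least $M/2$ elements. Within a class of length $L$, if $L \leq M$ then $L \in [M/2, M]$ and I would keep the class as a single block. If $L > M$, I would write $L = aM + b$ with $0 \leq b < M$, and take $a$ blocks of length $M$ followed by a tail block of length $b$ when $b \geq M/2$, or $a-1$ blocks of length $M$ followed by two blocks splitting the remaining $M + b$ elements into halves (both of size in $[M/2, M]$, since $M \leq M+b < 3M/2$) when $0 < b < M/2$. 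Concatenating these partitions across all residue classes yields a partition $[N] = \bigsqcup_j P_j$ with each $P_j$ an arithmetic progression of common difference $q$ and length in $[M/2, M]$.

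From $\sum_j |A \cap P_j| = |A \cap [N]|$ and $\sum_j |P_j| = N$, the pigeonhole principle then supplies some $P = P_j$ with $|A\cap P|/|P| \geq |A\cap [N]|/N$, as required. The only real work here is the block-counting verification for the partition, which I do not expect to pose any substantive obstacle; the role of the hypothesis $M \leq 2\lfloor N/q\rfloor$ is simply to guarantee that every residue class is long enough to participate in such a carve-up.
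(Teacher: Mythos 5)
Your proposal is correct and takes essentially the same route as the paper: both arguments split $[N]$ into residue classes modulo $q$, carve each class into subprogressions of common difference $q$ with lengths in $[\tfrac{1}{2}M, M]$ (the paper uses blocks of length $\lceil M/2\rceil$, merging the last two when the tail is short, whereas you use blocks of length $M$ and split the final $M+b$ elements in half when the remainder is small), and conclude with the pigeonhole principle. The only trivial point left implicit is the case $b=0$, where one simply takes $a$ blocks of length $M$ with no tail.
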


\begin{proof}
Let us first give the argument for $q = 1$.  We partition $[N]$ into the intervals
$$
(0, \ceil{M/2}] \cup (\ceil{M/2}, 2\ceil{M/2}] \cup \dots  \cup (m\ceil{M/2}, N],
$$
for some $m$ with $N - m\ceil{M/2} \leq \ceil{M/2}$.  If $N - m\ceil{M/2} = \ceil{M/2}$, then we obtain the result on applying the pigeonhole principle. So we may suppose that $N - m\ceil{M/2} \leq \floor{M/2}$. The pigeonhole principle again gives the result on partitioning similarly, but with the final interval equal to $((m-1)\ceil{M/2}, N]$.

We generalise to $q > 1$ by first partitioning $[N]$ into congruence classes mod $q$. Each such congruence class takes the form $-a + q \cdot [N_a]$ where $0 \leq a < q$ and $ N_a = \floor{(N+a)/q}$.  Since $N_a \geq M/2$ we can use our previous argument to partition $[N_a]$ into intervals, each with length in  $[\trecip{2}M, M]$.  The result follows once again from the pigeonhole principle.
\end{proof}

\subsection{Uniform translates}

We have shown that a highly uniform set contains many Brauer configurations. In general, one cannot guarantee that one of the colour classes in a finite colouring of $[N]$ is uniform. However, we can use Gowers' density increment lemma to show that there exists a long arithmetic progression $q \cdot [M]\subset[N]$ such that each colour class is  uniform on a translate of $q\cdot [M]$, and on the same translate its density is not diminished. 

\begin{lemma}[Uniform maximal translates]\label{thmUniBrau}
Given $0 < \eps \leq \trecip{2}$ and $d \geqslant 1$, let $\eta = \eta(d, \eps)$ denote the constant \eqref{GowersBd} appearing in Gowers' density increment lemma. Suppose that
\begin{equation}\label{C1bd}
N\geqslant\exp\exp(3r\eta^{-2}).
\end{equation}
Then for any sets $A_{1},...,A_{r}\subset[N]$ there exists a homogeneous progression $q\cdot[M]\subset[N]$ with $M\geqslant N^{\exp(-3r\eta^{-2})}$ such that the following is true. For each $i\in[r]$, there exists a translate $a_{i}+q\cdot[M]$ on which $A_{i}$ achieves its maximal translate density and on which $A_{i}$ is $\eps$-uniform of degree $d$.
\end{lemma}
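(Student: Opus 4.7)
The plan is to run a density-increment iteration on the parameters $(q, M)$, monitoring the maximal-translate density sum $\Delta(q, M) = \sum_{i=1}^{r}\delta_{q, M}(A_{i})$. Start with $(q, M) = (1, N)$; note $0 \leqslant \Delta \leqslant r$ throughout. At each stage, select for each $i$ a translate $a_{i} + q\cdot[M]$ at which $\delta_{q, M}(A_{i})$ is attained. If every $A_{i}$ is $\eps$-uniform of degree $d$ on its own translate, output $q\cdot[M]$ and halt. Otherwise, pick some $i$ for which $\eps$-uniformity fails on $a_{i}+q\cdot[M]$ and produce new parameters $(qq_{1}, M_{1})$ satisfying $\Delta(qq_{1}, M_{1}) \geqslant \Delta(q, M) + \eta/2$.

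To manufacture this increment, identify $a_{i} + q\cdot[M]$ with $[M]$ via the natural AP bijection; the $U^{d+1}$-ratio defining $\eps$-uniformity is invariant under this identification, so the pullback of $A_{i}$ fails to be $\eps$-uniform of degree $d$ on $[M]$. Apply Gowers' density increment lemma to the pullback, obtaining a sub-AP $s + q_{1}\cdot[M']\subset[M]$ of length $M' \geqslant \eta M^{\eta}$ on which the pullback has density at least $\delta_{q, M}(A_{i}) + \eta$. Pushing forward produces an AP $P = b + qq_{1}\cdot[M']$ witnessing $\delta_{qq_{1}, M'}(A_{i}) \geqslant \delta_{q, M}(A_{i}) + \eta$. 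To extract a genuine increment in $\Delta$ via Corollary \ref{brauer preservation}, the loss $(q_{1}M_{1}/M)r$ must be at most $\eta/2$; this is enforced by applying Lemma \ref{preserving density} inside $P$ to pass to a sub-AP of common difference $qq_{1}$ and length of order $M_{1} := \min\{M',\,\floor{\eta M/(8rq_{1})}\}$ without losing the density of $A_{i}$. Then $q_{1}M_{1} \leqslant \eta M/(8r)$ and the corollary delivers the promised increment. Note $qq_{1}M_{1} \leqslant qM \leqslant N$, so the homogeneous progression $qq_{1}\cdot[M_{1}]$ remains inside $[N]$.

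Since $\Delta$ is bounded by $r$ and increments by $\eta/2$ each step, the iteration halts within $2r/\eta$ steps. The quantitative task is controlling the final $M$. In the first branch of the minimum above, $M_{1}\geqslant \eta M^{\eta}$ directly from Gowers; in the second, the inclusion $q_{1}M' \leqslant M$ combined with $M' \geqslant \eta M^{\eta}$ gives $q_{1} \leqslant M^{1-\eta}/\eta$ and hence $M_{1} \geqslant \eta^{2}M^{\eta}/(8r)$. The recursion $\log M_{k+1} \geqslant \eta \log M_{k} - \log(8r\eta^{-2})$ is therefore universal, and iterating $2r/\eta$ times yields $\log M_{k} \geqslant \eta^{2r/\eta}\log N - O(\log(r\eta^{-2}))$. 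Combining the elementary estimate $\eta^{2r/\eta} \geqslant \exp(-2r\eta^{-2})$ (valid because $\log(1/\eta) \leqslant 1/\eta$ for $\eta \in (0,1]$) with the hypothesis \eqref{C1bd} absorbs the additive error, leaving $M \geqslant N^{\exp(-3r\eta^{-2})}$ as claimed.

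The main obstacle is the tension between two competing length constraints: the sub-AP output by Gowers' lemma is only guaranteed to have length $\eta M^{\eta}$, which is typically too long to suppress the loss term in Corollary \ref{brauer preservation}, yet any further shortening must not squander the density gain. Lemma \ref{preserving density} is the precise tool that reconciles these demands, and taking the minimum of its output with the Gowers bound at each stage produces the universal recursion used above.
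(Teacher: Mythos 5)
Your proposal is correct and takes essentially the same route as the paper: a density increment on the aggregated maximal-translate quantity $\Delta(q,M)$ driven by Gowers' density increment lemma, with Lemma \ref{preserving density} used to truncate the Gowers progression so that the loss term in Corollary \ref{brauer preservation} is dominated and the net gain is $\gg\eta$, the iteration terminating within $O(r/\eta)$ steps. The differences are only bookkeeping (your $\min$ with $\lfloor \eta M/(8rq_{1})\rfloor$ versus the paper's truncation to length about $\eta|P|/(2r)$, and a recursion on $\log M$ in place of the paper's explicit invariant $M_n\geq(\eta^2/5r)^{1+\eta+\cdots+\eta^{n-1}}N^{\eta^n}$), and your estimate $\eta^{2r/\eta}\geq\exp(-2r\eta^{-2})$ recovers the stated bound $M\geq N^{\exp(-3r\eta^{-2})}$ exactly as in the paper.
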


\begin{proof}
	  We give an iterative procedure, at each stage of which we have positive integers $q_n$ and $M_n$ satisfying 
	\begin{equation}\label{iteration conclusion}
	M_n \geqslant (\eta^2/5r)^{1 + \eta + \dots + \eta^{n-1}} N^{\eta^n}
	\quad \text{and}
	\quad
	\Delta(q_n, M_n) \geqslant n\eta/2.
	\end{equation}
	We initiate this on taking $q_0 := 1$ and $M_0 := N$ (the common difference and length of $[N]$).  Since $\Delta(q, M) \leqslant r$ this procedure must terminate at some $n \leqslant 2r\eta^{-1}$.
	
	Suppose that we have iterated $n$ times to give $q=q_n$ and $M =  M_n$.
	If, for each $A_{i}$, there is a translate $a_{i}+q\cdot[M]$ on which $A_{i}$ is $\eps$-uniform and achieves its maximal translate density, then we terminate our procedure. Suppose then that we can find $A_{j}$ which does not have this property. We now give the iteration step of our algorithm. 
	
	By the definition of maximal translate density, there exists $t\in\Z$ such that
	$$
	|A_j \cap (t + q \cdot [M])| = \delta_{q, M}(A_j)M.
	$$ 
	Let $A := \set{y \in  [M] : t+qy \in A_j }$. Since $A_j$ is not $\eps$-uniform of degree $d$ on $t + q\cdot [M]$, we see that $A$ is not $\eps$-uniform of degree $d$ on $[M]$. 
	By Gowers' density increment lemma, we deduce the existence of a progression $P\subset[M]$ of length $|P| \geqslant \eta M^{\eta}$ such that
	$$
	|A \cap P | \geqslant \left(\delta_{q, M}(A_j) + \eta \right)|P|.
	$$ 
	
	We would like the length and common difference of $P$ to be sufficiently small to allow for the effective employment of Corollary \ref{brauer preservation}.  Using \eqref{C1bd} and \eqref{iteration conclusion} one can verify that $\eta M^\eta \geq 2r\eta^{-1}$, so that the integer $ \lfloor \eta |P|/(2r) \rfloor$ is  positive.   Lemma \ref{preserving density} then gives a  subprogression $x + q'\cdot [M'] \subset P$, of the same common difference as $P$, such that $\trecip{2}\lfloor \eta |P|/(2r) \rfloor \leq M' \leq \lfloor \eta |P|/(2r) \rfloor$ and for some $x$ we have
	\begin{equation*}
	\frac{|A\cap (x+q'\cdot[M'])|}{M'}\geqslant\frac{|A\cap P|}{|P|}.
	\end{equation*} 
	Note that, since $P \subset [M]$ has common difference $q'$ we have $q'|P| \leq M$ and so $q' M' \leq q' |P| \eta/ (2r) \leq M \eta/(2r)$. Hence by Corollary \ref{brauer preservation} we obtain
	$$
	\Delta(q'q, M') \geqslant \Delta(q, M) + \trecip{2}\eta.
	$$

Again using \eqref{iteration conclusion} and \eqref{C1bd} one can check that $M' \geq (\eta^2/5r) M^\eta$, so we obtain \eqref{iteration conclusion} with $(q_{n+1}, M_{n+1}) := (q', M')$, and our iteration can continue.  Taking this iteration through to completion gives the lemma.
\end{proof}
We are now in a position to derive our main theorem.
\begin{proof}[Proof of Theorem \ref{weak brauer theorem}]
	 Let $\eta = \eta(k-1, \eps)$ be the quantity given by \eqref{GowersBd} in Gowers' density increment lemma, with $\eps $ to be determined, and suppose that $N\geqslant\exp\exp(4r\eta^{-2})$. Let $M,q$ be the positive integers obtained by applying Lemma \ref{thmUniBrau} to the partition $[N]=A_{1}\cup\cdots\cup A_{r}$. By the pigeonhole principle, there exists $j\in[r]$ such that
	\begin{equation}\label{abBound}
	|A_{j}\cap q\cdot[M/(2k-2)]|\geqslant\frac{1}{r}\left\lfloor\frac{M}{2(k-1)}\right\rfloor >\frac{M}{4(k-1)r},
	\end{equation}
	the latter following from the fact that 
	$$
	M \geq \exp\brac{\exp(-3r\eta^{-2})\log N} \geq \exp\exp(r\eta^{-2}).
	$$ 
	
	Let $t\in\Z$ be such that 
	\begin{equation*}
	|A_{j}\cap (t+q\cdot [M])|=\delta_{q, M}(A_{j})M.
	\end{equation*}
	We now construct sets $A\subset[M]$ and $B\subset[M/(2k-2)]$ by taking
	\begin{equation*}
	A := \{y \in  [M] : t+qy \in A_j \} \quad \text{and}\quad
	B := \{d \in  [M/(2k-2)] : qd \in A_{j}\}.
	\end{equation*}
	Our goal is to show that $\Lambda(1_{A};1_{B})>0$. If this is the case, then there exists an arithmetic progression of length $k$ in $A$ whose common difference lies in $B$. We can then infer from our construction of $A$ and $B$ the existence of a $(k+1)$-point Brauer configuration in $A_{j}$.
	
	Let $\alpha$ and $\beta$ denote the respective densities of $A$ and $B$ in $[M]$. From the bound \eqref{abBound} it follows that $\alpha,\beta >(4r(k-1))^{-1}$.
	Combining this with Lemma \ref{propBrauCount} gives
	\begin{equation*}
	\Lambda(\alpha 1_{[M]};1_{B})\geqslant\tfrac{1}{2}\alpha^{k}\beta M^{2}>\frac{M^{2}}{2^{2k+3}(k-1)^{k+1}r^{k+1}}.
	\end{equation*}
	Recall that the conclusion of Lemma \ref{thmUniBrau} guarantees that $A$ is $\eps$-uniform of degree $(k-1)$ (as a subset of $[M]$). Applying Corollary \ref{corUkCont} and the upper bound in \eqref{norm bounds} gives
	\begin{equation*}
		|\Lambda(1_{A};1_{B})-\Lambda(\alpha 1_{[M]};1_{B})|\leqslant kM^{2}\eps.
	\end{equation*}
	We therefore obtain $\Lambda(1_{A};1_{B})>0$, and hence the theorem, on taking
	\begin{equation}\label{eps choice}
	\eps^{-1} := k2^{2k+3}(k-1)^{k+1}r^{k+1}.
	\end{equation}
	Since we are assuming that $N \geqslant\exp\exp(4r\eta^{-2})$, this choice of $\eps$ gives a double exponential bound in $r^{O_k(1)}$.  
	
	Finally, we show that the more precise bound \eqref{mainBound} suffices. The inequality $\exp\exp(x) \leq 2^{2^{2x}}$ is valid for all $x \geqslant 1$, so that
	$$
	\exp\exp(4r \eta^{-2}) \leq 2^{2^{8r\eta^{-2}}}.
	$$
	For our choice \eqref{eps choice} of $\eps$, we have
	$$
	\eta^{-1} \leq  \brac{16(k+1)k(k-1)^{k+1} 2^{2k+3} r^{k+1}}^{2^{k + 2^{k+9}}}
	$$
	One may check that
	$$
	16(k+1)k(k-1)^{k+1} \leq 2^{k^2 + 4},
	$$
	so that, on using $r\geq 2$, we have
	$$
	8r\eta^{-2} \leq 8r \brac{ 2^{k^2 + 2k+7} r^{k+1}}^{2^{1+k + 2^{k+9}}} \leq  r^{\brac{k+3}^22^{1+k + 2^{k+9}}} \leq r^{2^{2^{k+10}}},
	$$
	as required.
\end{proof}

\section{An improved bound for four-point configurations}\label{four point sec}

In this section we focus on the four-point Brauer configuration \eqref{four point},   improving our bound on the Ramsey number to $\exp \exp(r^{1+o(1)})$.  Instead of finessing the quantitative aspect of Lemma \ref{thmUniBrau}, we opt to mimic the sparsity--expansion dichotomy of \S\ref{secSchurFF}. This requires a higher order analogue of the difference set $A - A$, namely
$$
\mathrm{Step}_3(A) := \set{d : A\cap (A-d) \cap (A-2d) \neq \emptyset}.
$$

\begin{lemma}[Sparsity--expansion dichotomy]\label{brauer dichotomy lemma}
There exists an absolute constant $C$ such that for $N \geq \exp \exp (Cr\log^2 r)$ the following holds.   For any $r$-colouring $A_1\cup \dots \cup A_r = [N]$ there exist positive integers $q$ and $M$ with $qM\leqslant N$ such that for any $i \in [r]$ we have one of the two following possibilities.
\begin{itemize}
\item (Sparsity).  
\begin{equation}\label{brauer sparsity}
|A_i \cap q \cdot [3M]| < \trecip{r}M;
\end{equation}
\item (Expansion).  
\begin{equation}\label{brauer expansion}
|\mathrm{Step}_3(A_i) \cap q \cdot [M]| > \brac{1 - \trecip{r}} M.
\end{equation}
\end{itemize} 
\end{lemma}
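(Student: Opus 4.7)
The plan is to adapt the sparsity--expansion dichotomy of Lemma \ref{lemSchurDich} to the integer setting by running a maximal--translate density increment on pairs $(q, M)$ in the spirit of Lemma \ref{thmUniBrau}, using linear ($U^2$) Fourier analysis in place of quadratic (since three-term progressions are controlled by linear characters alone). I would maintain integers $(q, M)$ with $3qM \leq N$, tracking $\Delta(q, M) = \sum_i \delta_{q, M}(A_i) \in [1, r]$. At each stage I check the dichotomy: if it holds for every $A_i$ we terminate; otherwise there is a \emph{dense non-expanding} colour class $A_i$ satisfying
$$
|A_i \cap q\cdot[3M]| \geq \tfrac{M}{r} \quad \text{and} \quad |\mathrm{Step}_3(A_i) \cap q\cdot[M]| \leq \bigl(1 - \tfrac{1}{r}\bigr) M,
$$
from which we must extract a sub-progression on which $\Delta$ strictly increases.

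Mimicking the finite-field argument of Lemma \ref{lemSchurDich}, I would set
$$
A := \bigset{y \in [3M] : qy \in A_i}, \qquad B := \bigset{d \in [M] : qd \notin \mathrm{Step}_3(A_i)},
$$
so that $A$ has density $\alpha \geq 1/(3r)$ in $[3M]$ and $B$ has density $\beta \geq 1/r$ in $[M]$. If $y, y+d, y+2d \in A$ and $d \in B$, then $qy,\, qy+qd,\, qy+2qd \in A_i$ forces $qd \in \mathrm{Step}_3(A_i)$, contradicting $d \in B$. Hence $\Lambda(1_A, 1_A, 1_A; 1_B) = 0$, whereas a direct count analogous to Lemma \ref{propBrauCount} gives $\Lambda(\alpha 1_{[3M]}, \alpha 1_{[3M]}, \alpha 1_{[3M]}; 1_B) = \alpha^3 \sum_{d \in B}(3M - 2d) \gg r^{-4} M^2$. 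The $U^2$-version of Lemma \ref{lemGenVon}, valid since three-term progressions are controlled by linear Fourier analysis rather than the $U^3$-norm of Corollary \ref{corUkCont}, then forces $\lVert 1_A - \alpha 1_{[3M]} \rVert_{U^2}$ to be large, which by Parseval yields a non-trivial linear Fourier coefficient of comparable size. A standard Roth-type argument, partitioning $[3M]$ into sub-progressions on which a single linear phase is approximately constant via Dirichlet's theorem, then produces a sub-progression $a + q'\cdot[M']$ on which the density of $A_i$ exceeds $\delta_{q, M}(A_i) + cr^{-O(1)}$. Corollary \ref{brauer preservation} strictly increments $\Delta$, and since $\Delta \leq r$ the iteration terminates after at most $r^{O(1)}$ outer steps.

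The main obstacle is quantitative: a naive Roth-type density increment loses a factor of $\exp(r^{O(1)})$ in $M$ per outer step, producing only a triple-exponential bound of the form $\exp\exp(Cr^{O(1)})$. To sharpen this to the claimed $\exp\exp(Cr\log^2 r)$, I would replace the single-character increment by the energy-increment argument of Green--Tao \cite{GT09}, adapted from $U^3$ to $U^2$. This aggregates many medium-sized Fourier coefficients simultaneously and reduces the per-outer-step loss in $M$ from $\exp(r^{O(1)})$ to a factor of $\exp((\log r)^{O(1)})$, at the cost of only $O(\log r)$ extra substeps. Carefully interleaving this energy increment with the outer maximal-translate iteration, and tallying the total loss across $O(r)$ outer and $O(\log r)$ inner steps, is the technically delicate heart of the proof and is precisely what produces the $r\log^2 r$ factor in the exponent.
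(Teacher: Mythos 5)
There is a genuine gap, and it is the central one: your claim that linear ($U^2$) Fourier analysis suffices ``since three-term progressions are controlled by linear characters alone'' is false for the configuration actually being counted. The relevant count is not a plain $3$-AP count but $\Lambda(1_A;1_B)=\sum_{x,d}1_A(x)1_A(x+d)1_A(x+2d)1_B(d)$, in which the common difference carries the weight $1_B$. This four-form system $(x,\,x+d,\,x+2d,\,d)$ has true complexity $2$: because of the identity $x^2-2(x+d)^2+(x+2d)^2=2d^2$, a set $A$ with purely quadratic structure (e.g.\ $A=\set{y:\|\theta y^2\|<\alpha/2}$) is linearly uniform, yet its count of $3$-APs with difference in a correlated set $B$ (e.g.\ $B=\set{d:\|2\theta d^2\|\ \text{small}}$) deviates wildly from $\alpha^3\Lambda(1_{[M]};1_B)$. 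Hence there is no ``$U^2$-version of Lemma \ref{lemGenVon}'' for this pattern, and the step where you deduce from $|\Lambda(1_A;1_B)-\Lambda(\alpha 1_{[3M]};1_B)|\gg r^{-4}M^2$ that $\norm{1_A-\alpha 1_{[3M]}}_{U^2}$ is large, followed by Parseval and a single-character Roth increment, fails. This is exactly why the paper's proof invokes Corollary \ref{corUkCont} with $k=3$ (so $U^3$ control) and then runs the Green--Tao \emph{quadratic} factor energy increment \cite[Theorem 5.6, Proposition 6.2, Lemma 6.1]{GT09}, rather than linear Fourier analysis.

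Your quantitative bookkeeping is also off. With per-step length loss $M\mapsto\exp(-r^{O(1)})M^{r^{-O(1)}}$ and $r^{O(1)}$ steps one already gets a double (not triple) exponential $\exp\exp(r^{O(1)})$; and the improvement to $\exp\exp(Cr\log^2 r)$ does not come from shrinking the per-step loss to $\exp((\log r)^{O(1)})$, which the quadratic-factor refinement into progressions (losing $M\mapsto M^{r^{-O(1)}}$) cannot deliver anyway. In the paper it comes from making each increment \emph{multiplicative}, $\delta\mapsto(1+c)\delta$, on the maximal translate density of the incremented colour (while merely preserving, via Lemma \ref{preserving density}, the densities of the other colours on progressions of comparable length), and then observing that the colour incremented most often can double its density only $O(\log r)$ times, so the total number of iterations is $O(r\log r)$; multiplying by the per-step cost $\log(1/\text{loss})\ll r^{O(1)}$ in the exponent hierarchy gives $\log\log N\ll r\log^2 r$. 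So both the analytic engine (quadratic, not linear, uniformity) and the iteration-count mechanism need to be replaced before your outline can work.
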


Before proving this, let us first use it to obtain a bound on the Ramsey number of four-point Brauer configurations.
\begin{proof}[Proof of Theorem \ref{improved bound}]
Let $q$ and $M$ denote the numbers provided by the dichotomy.  By the pigeonhole principle, there exists a colour class $A_i$ satisfying
$$
|A_i\cap q \cdot [M]| \geq\trecip{r}M.
$$
So $A_i$ is not sparse in the sense of \eqref{brauer sparsity}.  It follows that $A_i$ must instead satisfy the expansion property \eqref{brauer expansion}.  By inclusion--exclusion
$$
|A_i\cap \mathrm{Step}_3(A_i)\cap q\cdot [M]| \\
\geq |A_i \cap q\cdot [M]| + |\mathrm{Step}_3(A_i)\cap q\cdot [M]| - M > 0.
$$
In particular $A_i\cap \mathrm{Step}_3(A_i)$ contains a non-zero element.
\end{proof}

\begin{proof}[Proof of Lemma \ref{brauer dichotomy lemma}]
If $r=1$, then $[N]=A_{1}$ and so (\ref{brauer expansion}) holds with $q=1$ and $M=N$ for all $N\geqslant\exp\exp(0)>2$. We may therefore henceforth assume that $r \geq 2$. Set $q_0 := 1$ and $N_0 := N$.  We proceed by an iterative procedure, at each stage of which we have positive integers $q_n$, $N_n$ and $M_i = M_i^{(n)}$ ($1\leq i \leq r)$ such that for $n\geq 1$ we have: \\
\begin{enumerate}[(i)]
\item\label{length} $\displaystyle
N_{n} \geq  \exp\brac{-r^{O(1)}}N_{n-1}^{r^{-O(1)}}$;\\ 
\item\label{order} $\displaystyle
M_i \in [\trecip{2}N_n, N_n]$  for all $1\leq i \leq r$;\\
\item\label{preservation} $\displaystyle \delta_{q_n,M_i^{(n)}}(A_i)  \geq  \delta_{q_{n-1} , M_{i}^{(n-1)}}(A_i )$ for all $1\leq i \leq r$;\\
\item\label{density} $ \displaystyle \delta_{q_n,M_i^{(n)}}(A_i)  \geq  \brac{1+c}\delta_{q_{n-1},M_{i}^{(n-1)}}(A_i)$ for some $ i $ with\\[10pt] $\delta_{q_{n-1},M_{i}^{(n-1)}}(A_i) \geq \trecip{7r}$.  Here $c = \Omega(1)$ is an absolute constant.\\
\end{enumerate}

At stage $n$ of the iteration we classify each colour class $A_i$ according to which of the following hold.
\begin{itemize}
\item  (Sparse colours).  These are the colours $A_i$ for which 
$$
\delta_{q_n, M_i}(A_i) < \trecip{7r}. 
$$
\item  (Dense expanding colours).  These are the dense colours $\delta_{q_n, M_i}(A_i) \geq \trecip{7r}$ for which we have the additional expansion estimate
\begin{equation}\label{brauer non expansion}
|\mathrm{Step}_3(A_i) \cap q_n\cdot [N_n/6]| > \brac{1 - \trecip{r}}\floor{ N_n/6}.
\end{equation}
\item  (Dense non-expanding colours).  The class $A_i$ is \emph{dense non-expanding} if it is neither sparse nor dense expanding.
\end{itemize}
If there are no dense non-expanding colours, then we terminate our procedure.  If $N_n < 100$, then we also terminate our procedure.  Let us therefore suppose that $N_n \geq 100$ and there exists a dense non-expanding colour class $A_i$.  Our aim is to show how, under these circumstances, the iteration may continue.

By the definition of maximal translate density, there exists $a$ such that
\begin{equation}\label{max density on Ai}
|A_i \cap (a + q_n \cdot [M_i])| M_i^{-1} = \delta_{q_n, M_i}(A_i) \geq \trecip{7r}.
\end{equation}
Writing $M := M_i$, we define dense subsets $A, B \subset [M]$ by taking
\begin{equation}\label{brauer A B defn}
A := \set{y \in  [M] : a+q_ny \in A_i } \quad \text{and} \quad B := \set{d \in  [N_n/6] : q_nd \notin \mathrm{Step}_3(A_i)}.
\end{equation}
We recall that $M/3 \geq N_n/6 \geq M/6$ .

Letting $\alpha$ denote the density of $A$ in $[M]$, we see that $\alpha$ is equal to the left-hand side of \eqref{max density on Ai}.  Our assumption that $A_i$ is dense non-expanding and $N_n \geq 100$ together imply that  $B$ has size $\gg r^{-1} M$ and that 
$$
\sum_{x, d} 1_A(x) 1_A(x+d)1_A(x+2d) 1_B(d) = 0.
$$
Using the notation \eqref{counting op} and (the proof of) Lemma \ref{propBrauCount} we have 
$$
\abs{\Lambda(1_A; 1_B)  -\Lambda(\alpha 1_{[M]}; 1_B)} \gg \alpha^{3} M|B| \gg r^{-4} M^2.
$$

From hereon, we assume that the reader is familiar with the notation and terminology of Green and Tao \cite{GT09}. Applying \cite[Theorem 5.6]{GT09} in conjunction with Corollary \ref{corUkCont} we obtain a quadratic factor $(\mathcal{B}_1, \mathcal{B}_2)$ of complexity and resolution $\ll r^{O(1)}$  such that the function $f:= \E(1_A \mid  \mathcal{B}_2)$ satisfies
$$
\abs{\Lambda(f;1_B)  -\Lambda(\alpha 1_{[M]}; 1_B)} \gg \alpha^{3} M|B|.
$$
Define the $\mathcal{B}_2$-measurable set
$$
\Omega:= \set{x \in [M] : f(x) \geq (1+c)\alpha},
$$
where $c>0$ is small enough to make the following argument valid.\footnote{Specifically, $c$ is chosen to be sufficiently small relative to all the implicit constants appearing in the inequalities preceding (\ref{omega lower bound}).}

For functions $f_1, f_2, f_3 : [M] \to\R$ we have the bound
\begin{equation}\label{L1 bound}
\abs{\Lambda( f_1, f_2, f_3; 1_B)} \leqslant M|B|\lVert f_{i}\rVert_{L^{1}([M])} \prod_{j \neq i} \norm{f_j}_\infty.
\end{equation}
Invoking the telescoping identity we used to prove Corollary \ref{corUkCont} gives us the bound $\abs{\Lambda(f;1_B)  -\Lambda(f1_{\Omega^c};1_B)} \ll|\Omega||B|$, so that
$$
|\Omega||B| + \abs{\Lambda(f1_{\Omega^c};1_B)  -\Lambda(\alpha1_{[M]};1_B)} \gg  \alpha^{3} M|B| .
$$
Another application of the telescoping identity in conjunction with \eqref{L1 bound} gives 
\begin{align*}
\abs{\Lambda(f1_{\Omega^c};1_B)  -\Lambda(\alpha 1_{[M]};1_B)} & \ll \alpha^2 M|B|
\norm{f1_{\Omega^c}- \alpha1_{[M]}}_{L^{1}([M])}\\
& \ll \alpha^2 M|B|\norm{f - \alpha1_{[M]}}_{L^{1}([M])} + |B||\Omega|,
\end{align*}
so that
$$
 |\Omega|+ \alpha^2 M\norm{f - \alpha1_{[M]}}_{L^{1}([M])}  \gg \alpha^3 M.
$$

Since $f - \alpha1_{[M]}$ has mean zero, its $L^1$-norm is equal to twice the mean of its positive part. The function $\brac{f - \alpha1_{[M]}}_+$ can only exceed $c \alpha$ on $\Omega$, so taking $c$ small enough gives 
\begin{equation}\label{omega lower bound}
|\Omega| \gg \alpha^3M \gg r^{-3} M.
\end{equation}
As $\mathcal{B}_2$ has complexity and resolution $r^{O(1)}$ it contains at most $\exp(r^{O(1)})$ atoms.  By \cite[Proposition 6.2]{GT09} each such atom can be partitioned into a further
\begin{equation}\label{APno}
\exp(r^{O(1)}) M^{1-r^{-O(1)}}
\end{equation}
disjoint arithmetic progressions. Hence $\Omega$ itself can be partitioned into arithmetic progressions, the number of which is at most \eqref{APno}.  Combining this with \eqref{omega lower bound} and \cite[Lemma 6.1]{GT09}, we see that there exists an arithmetic progression $P$ of length at least
$$
\exp\brac{-r^{O(1)}} M^{1/r^{O(1)}}
$$
on which $A$ has density at least $(1+\tfrac{c}{2})\alpha$. By partitioning $P$ into two pieces and applying the pigeon-hole principle, we may further assume that $|P|q\leqslant M$, where $q$ is the common difference of $P$.

Writing $q_{n+1}$ and $N_{n+1} = M_i^{(n+1)}$ for the common difference and length of $P$, we see that \eqref{length} and \eqref{density} are satisfied.  For all $j \in [r]\setminus\set{i}$ we have 
$$
q_{n+1} N_{n+1} \leq M =  M_{i}^{(n)} \leq N_n \leq 2M_{j}^{(n)}.
$$
Hence we may apply Lemma \ref{preserving density} to each colour class $A_j$ with $j \neq i$ to obtain a progression $P_j$ of common difference $q_n$ and length $M_{j}^{(n+1)}$ such that \eqref{order} and \eqref{preservation} hold.  It follows that our iteration may continue.

Our iterative procedure must terminate at stage $n$ for some $n \ll r^{2}$.  To see this, note that the sum of the maximal translate densities $\delta_{q_n ,M_i}(A_i)$ is at most $r$, and this quantity increases by at least $\Omega(1/r)$ at each iteration.   Our next task is to improve this upper bound on the number of iterations to $n\ll r \log r$.

Let $A_i$ denote the colour class for which the density increment \eqref{density} occurs most often.  By the pigeonhole principle this happens on at least $n/r$ occasions. If the density of $A_i$ increments at least $c^{-1}$ times, then its density doubles.  After a further $\trecip{2} c^{-1}$ increments the density of $A_i$ quadruples.  The density of $A_i$ has therefore increased by a factor of $2^m$ if the number of iterations is at least
\begin{equation}\label{no A_i increments}
\ceil{c^{-1}} + \ceil{\trecip{2}c^{-1}} + \dots +  \ceil{\trecip{2^{m-1}}c^{-1}} \leq m + 2c^{-1}.
\end{equation}
The first time $A_i$ increments its initial density is at least $1/(7r)$, so if the number of increments experienced by $A_i$ is at least \eqref{no A_i increments} then its final density is at least $2^m /(7r)$. If $n/r > 2c^{-1} + \ceil{\log_2(7r)}$ then we obtain a density exceeding 1, a contradiction.  It follows that the total number of iterations $n$ satisfies $n = O( r \log_2 r)$.

Having shown that our iteration must terminate in $ n=O(r \log r)$ steps, let us now ensure that termination results from a lack of dense non-expanding colours.  This follows if we can ensure that $N_n \geq 100$.   Applying the lower bound \eqref{length} iteratively we obtain
$$
N_n \geq \exp\brac{-r^{O(1)}} N^{r^{-O(n)}}.
$$ 
Using the fact that $n \ll r\log r$, the right-hand side above is at least 100 provided it is not the case that $N \leq \exp\exp\brac{O(r\log^2 r)}$.  Given this assumption, we obtain the conclusion of Lemma \ref{brauer dichotomy lemma} on taking $M := \floor{N_n/6}$.
\end{proof}

\section{Lefmann quadrics}\label{lefmann sec}

In this section we show how our results can be used to obtain bounds on the Ramsey numbers for quadric equations of the form
\begin{equation}\label{lefquad}
\sum_{i=1}^{s}a_{i}x_{i}^{2}=0.
\end{equation}
Lefmann \cite[Fact 2.8]{lefmann} established the following sufficient condition for equations of the above form to be partition regular.\\

\noindent\textbf{Lefmann's criterion.} \emph{Suppose that $a_{1},...,a_{s}\in\Z\setminus\{0\}$ satisfy the following two properties:
	\begin{enumerate}[\upshape (i)]
		\item there exists a non-empty set $I\subset[s]$ such that $\sum_{i\in I}a_{i}=0$;
		\item there exists a positive integer $\lambda$ such that the system
		\begin{equation*}
		\lambda^{2}\sum_{i\notin I}a_{i}+ \sum_{i\in I}a_{i}u_{i}^{2}=
		\sum_{i\in I}a_{i}u_{i}=0.
		\end{equation*}
		has a solution in integers $(u_i)_{i \in I}$.
	\end{enumerate}
Then in any finite partition of the positive integers $\N=A_{1}\cup\cdots\cup A_{r}$, there exists $i\in[r]$ and $x_{1},...,x_{s}\in A_{i}$ satisfying (\ref{lefquad}).\\
}

Lefmann observed that assumption (i) is necessary for the conclusion of the above theorem to hold. Lefmann then showed that if (i) and (ii) both hold, then \eqref{lefquad} has a solution over any set of the form $\{x,x+d,...,x+(k-1)d,\lambda d\}$, where $k=1+2\max_{i\in I}|u_{i}|$. We  therefore obtain our quantitative version of Lefmann's result (Theorem \ref{intro lefmann}) from the following analogue of Theorem \ref{weak brauer theorem}.

\begin{theorem}[Ramsey bound for generalised Brauer configurations]
	For positive integers $k,\lambda$ there exists an absolute constant $C = C(k, \lambda)$ such that for any $r \geq2$ and $N \geq \exp\exp(r^C)$, if $[N]$ is $r$-coloured then there exists a monochromatic configuration of the form $\{x,x+d,...,x+(k-1)d,\lambda d\}$.
\end{theorem}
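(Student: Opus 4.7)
The plan is to imitate almost verbatim the proof of Theorem \ref{weak brauer theorem} at the end of Section \ref{general}, with the only substantive change being a rescaling in the way we select and exploit the common difference. The dilation factor $\lambda$ will enter the argument only as a harmless multiplicative constant, to be absorbed into $C(k,\lambda)$.

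First I would invoke Lemma \ref{thmUniBrau} (with degree parameter $k-1$ and an $\epsilon>0$ to be chosen at the end) to produce a homogeneous progression $q\cdot[M]\subset[N]$ of length $M\geqslant N^{\exp(-3r\eta^{-2})}$, together with, for each colour class $A_i$, a translate $a_i+q\cdot[M]$ on which $A_i$ realises its maximal $(q,M)$-translate density and is $\epsilon$-uniform of degree $k-1$. Instead of pigeonholing $A_j$ inside $q\cdot[M/(2k-2)]$ as in Theorem \ref{weak brauer theorem}, I would pigeonhole inside the sub-progression of step $\lambda q$, namely $\lambda q\cdot[M/(2(k-1)\lambda)]$, to extract a colour class $A_j$ with
$$
\bigabs{A_j\cap \lambda q\cdot[M/(2(k-1)\lambda)]}\gg \tfrac{M}{r\lambda(k-1)}.
$$
Since this set is contained in $q\cdot[M]$, we also obtain $\delta_{q,M}(A_j)\gg 1/(r\lambda(k-1))$.

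Next, fixing $t$ at which the maximum defining $\delta_{q,M}(A_j)$ is attained, I would set
$$
A := \bigset{y\in[M]:t+qy\in A_j} \quad\text{and}\quad B := \bigset{d\in[M/(2(k-1)\lambda)]:\lambda qd\in A_j}.
$$
Note that $B\subset[M/(2k-2)]$, so Lemma \ref{propBrauCount} applies and yields $\Lambda(\alpha 1_{[M]};1_B)\gg \alpha^{k}\beta M^2$, where the densities $\alpha,\beta$ of $A$ and $B$ in $[M]$ are both $\gg 1/(r\lambda(k-1))$. The $\epsilon$-uniformity of degree $k-1$ of $A_j$ on $t+q\cdot[M]$ descends to $\epsilon$-uniformity of $A$ on $[M]$, so Corollary \ref{corUkCont} gives $|\Lambda(1_A;1_B)-\Lambda(\alpha 1_{[M]};1_B)|\leqslant kM^2\epsilon$. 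Choosing $\epsilon^{-1}$ of order $r^{k+1}$ times a constant depending only on $k$ and $\lambda$ then forces $\Lambda(1_A;1_B)>0$, producing $y\in A$ and $d\in B$ with $y,y+d,\dots,y+(k-1)d\in A$; translating back to $A_j$ furnishes the desired monochromatic configuration $\set{x,x+D,\dots,x+(k-1)D,\lambda D}$ in $A_j$ with $x=t+qy$ and $D=qd$.

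Finally, tracking the dependence of $\eta$ on $\epsilon$ through \eqref{GowersBd} (just as in the closing computation of the proof of Theorem \ref{weak brauer theorem}) shows that $\eta^{-1}\leqslant r^{C(k,\lambda)}$, so the hypothesis $N\geqslant \exp\exp(4r\eta^{-2})$ of Lemma \ref{thmUniBrau} is secured once $N\geqslant \exp\exp(r^{C(k,\lambda)})$ for a sufficiently large $C(k,\lambda)$. I do not anticipate any real obstacle here: the heavy lifting (density increment against the maximal translate density, Gowers' local inverse theorem, generalised von Neumann) has already been carried out in Section \ref{general}, and the only adjustments are the rescaled pigeonhole and the insertion of $\lambda$ in the definition of $B$.
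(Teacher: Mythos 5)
Your proposal is correct and is exactly what the paper intends: its proof of this theorem is simply the remark that it is ``essentially the same as the proof of Theorem \ref{weak brauer theorem}'', and your adaptation (pigeonholing on $\lambda q\cdot[M/(2(k-1)\lambda)]$, redefining $B$ via $\lambda q d\in A_j$, and absorbing $\lambda$ into the constants) is the natural way to carry that out, with all steps checking out.
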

\begin{proof}
	This is essentially the same as the proof of Theorem \ref{weak brauer theorem}. 
\end{proof}

\end{document}